\title{Reversing Palindromic Enumeration in Rank Two Free Groups}
\author{Andrew E. Silverio}
\address{SentriLock LLC, 45069}
\email{andrewsilverio@gmail.com}
\thanks{The results here are part of the author's Ph.D.~thesis. The author thanks the Rutgers-Newark Mathematics Department for its support during the course of his graduate studies and the Graduate School Newark for a Dissertation Fellowship.}
\newtheorem{theorem}{Theorem}[section]
\newtheorem{corollary}[theorem]{Corollary}
\newtheorem{definition}[theorem]{Definition}
\newtheorem{lemma}[theorem]{Lemma}
\numberwithin{equation}{section}
\begin{document}

\begin{abstract}

The Gilman-Maskit algorithm for determining the discreteness or non-discreteness of a two-generator subgroup of $\mathrm{PSL}_2\mathbb{R}$ terminates with a pair of generators that are Farey words \cite{algorithm}. The Farey words are primitive words that are indexed by rational numbers and infinity. The so-called E-words \cite{enumeration}, primitive words with palindromic or palindromic product forms, are also indexed by rational numbers and infinity. We produce a modification of the Gilman-Maskit algorithm so that the stopping generators are E-words and what can be considered a new palindromic enumeration scheme. The original definition of the enumeration scheme can be implemented and run in a machine without any modification. However every time a recursion \textit{calls itself}, the state of the \textit{previous caller} is stored until the recursion stops calling itself. It is often efficient for a recursion to minimize calling itself in order to avoid wasted resources such as time and storage space. The non-recursive formulas for special cases reduce the self-calling of the recursion.
\end{abstract}
\maketitle
\section{Introduction}
The difficulty of determining the discreteness of a group of M\"obius transformations has lead to the development of algorithms and iterative methods. Some methods or procedures utilize J{\o}rgensen's Inequality \cite{MR0427627} and the Poincar\'e Polyhedron Theorem \cite{MR1393195} which supply necessary conditions for the non-discreteness and discreteness respectively. The J{\o}rgensen number of a pair of matrices $(A,B)$ is $J(A,B)$ where $J(A,B)= \lvert \mathrm{tr}[A,B]-4\rvert + \lvert \mathrm{tr}(A)^2 -2 \rvert$. A non-elementary group is not discrete if there is a pair of elements $A$ and $B$ where $J(A,B)<1$. The Poincar\'e Polygon Theorem concludes that a group is discrete if there is a finite sided polygon and a set of side pairings generating the group which satisfy a number of properties.
Thus applying the Poincar\'e Polygon Theorem requires verifying that a number of hypotheses are satisfied.

\par R.~Riley's procedure \cite{MR689477} to determine the discreteness of a group $G$ starts with a finite set of generators $S_1$ of $G$ and examines all pairs $(A,B)$ $\in S_1 \times S_1$ for the for $J(A,B)<1$ condition. If this is not satisfied, then the procedure tries to construct a candidate fundamental domain and a side-pairing using elements of  $S_1$ to test the hypothesis of the Poincar\'e Polygon Theorem. This test is applied to all subsets of $S_1$.  If a determination of discreteness or non-discreteness is not made, $S_1$ is replaced by $S_2$ which is a set of all words of length at most two in the generators in $S_1$, and the process is repeated. This procedure, called Riley's procedure, does not always terminate at some $S_n$. Riley's method is an example of a \emph{semi-algorithm}, a procedure which is a finite algorithm in some cases but not in others. In any input, the length and number of words the semi-algorithm considers grows exponentially.

\par Gilman and Maskit have a finite algorithm for the case of two-generator subgroups of $\mathrm{PSL}_2\mathbb R$ where each element of any pair of generators considered can be any geometric type (elliptic, parabolic, hyperbolic) as long as any pair of hyperbolic generators have disjoint axes. Gilman has an additional algorithm for the case when any pair of hyperbolic generators has intersecting axes \cite{MR1290281}. The computational complexity of Riley's procedure and the Gilman-Maskit  algorithm is analyzed in \cite{MR2699935} and in \cite{MR1290281}. Riley's procedure is double exponential. Y.~C.~Jiang showed that Gilman-Maskit algorithm is of polynomial time \cite{Jiang}.
\par More generally, Gilman and Keen \cite{GKsemi} have a procedure that is a semi-algorithm for two-generator non-elementary subgroups of $\mathrm{PSL}_2\mathbb C$. Because of the existence of geometrically infinite groups, it is thought that there could be no complete algorithm.

\par In \cite{enumeration} a new iteration scheme for primitive words in rank-two free group is given. It is called the \emph{palindromic enumeration scheme} as each word, called an \emph{E-word}, is either the unique palindromic word in its conjugacy class or is given as the unique element in its conjugacy class that is a product of two palindromes that appeared at the previous step in the enumeration scheme and \cite{GKsemi} used the concept of the algorithm as a two stage automata initiated here.  The words in the enumeration scheme correspond to the rational numbers and thus can be labeled $E_{p/q}$ where $p/q$ is a rational number and its continued fraction expansion is given by $[a_0; a_1, a_2,\ldots, a_k]$.

\par The goal here is to shorten some steps in the Gilman-Maskit algorithm. These should also apply to the Gilman-Keen procedure and might also apply to shorten the Riley procedure when applied to an initial two-generator group. 
Here we present two alternative ways of producing and studying the E-words defined in \cite{enumeration}, first by reducing the number of recursions in the palindromic enumeration scheme and second by defining a non-recursive iterative scheme that comes from modifying the Gilman-Maskit algorithm \cite{algorithm}.

\section{Organization}

\par  In section \ref{section:earlytermination}, alternative but equivalent conditions for terminating the recursive iteration are shown. Particularly, the enumeration scheme can be terminated on two conditions: when the input is either an integer or the reciprocal of an integer. This result helps in implementing the E-enumeration scheme in a program we call the \textit{E-word Calculator}. A sample source code can be found on \url{https://github.com/andrewsilverio/EwordsEnumeration}.

\par In section \ref{section:modified}, the Gilman-Maskit algorithm is modified so that the stopping generators are E-words.


\par Finally, we apply the investigations of the modified algorithm to obtain a theorem, Theorem \ref{theorem:length} about the number of E-words of a given length. Examples are provided in section \ref{section:examples}.

\section{Equivalent Conditions for Terminating Palindromic Primitives} \label{section:earlytermination}

It is well known that the conjugacy classes of the primitive elements of a rank-2 free group $F_2$ can be indexed by the rational numbers and infinity up to taking inverses. Moreover, it is also known that for each conjugacy class of primitive elements, there is a representative that is either a palindrome or product of two palindromes \cite{enumeration}. The palindromic enumeration scheme enumerates all primitive words by defining a function $E: \mathbb{Q} \cup \{\infty\} \to F_2 = \langle A, B \rangle$. This function is recursive and terminates on conditions $0 \mapsto A^{-1}$ and $\infty \mapsto B$. In this section, we provide non-recursive formulas for this function $E$ in cases where the rational number is an integer or reciprocal of an integer. These formulas serve as an alternative set of two terminating conditions applied to the palindromic enumerating scheme derived by Gilman and Keen \cite{enumeration}.

\par The non-recursive formulas for special cases reduce the self-calling of the recursion in the definition of the palindromic enumeration scheme.

\subsection{Summary of the Palindromic Enumeration Scheme}

The notation used here for elements of $\mathbb{Q} \cup \{ \infty \}$ are of the form $p/q$ where $ p \in \mathbb{Z}$, $ q \in \mathbb{Z} \cap [ 0 , \infty)$ and $\mathrm{gcd}(p,q)=1$. The element $\infty$ is denoted by $1/0$. By definition, $\frac{1}{0}$ and $\frac{0}{1}$ are in lowest terms.
\begin{definition}
Let $p/q, r/s \in \mathbb{Q} \cup \{ \infty\}$. The pair $p/q$ and $r/s$ are called \emph{Farey neighbors} if $\lvert ps - rq \rvert =1$.
\par If $p/q$ and $r/s$ are Farey neighbors, the \emph{Farey sum} of $p/q$ and $r/s$ is \[ \frac{p}{q} \oplus \frac{r}{s} = \frac{ p+r}{q+s}.\]
\end{definition}
Both $p/q$ and $r/s$ are Farey neighbors of their Farey sum. The Farey neighbors do not have the transitive property. A non-integer rational number may have infinitely many Farey neighbors but the set of its Farey neighbors is certainly bounded. We provide a name for its minimum and maximum such neighbor.
\begin{definition}
The smallest and largest Farey neighbors of a nonzero rational number $p/q$ are called \emph{parents} of $p/q$.
\end{definition}

The following is the definition of the palindromic enumeration scheme found in \cite{enumeration}.
 Set $E_{0/1}=A^{-1}$ and $E_{1/0}=B$. For the rest of $\mathbb{Q}$, take the parents $m/n$ and $r/s$ of $p/q$ such that $\frac{m}{n} < \frac{p}{q} < \frac{r}{s}$. Define $E_{p/q}$ recursively by
\[E_{p/q}= \begin{cases}
E_{r/s}E_{m/n} & \text{if $pq$ is odd,}\\
E_{m/n}E_{r/s} & \text{if $pq$ is even.}
\end{cases}\]
\begin{definition}
The function $E: \mathbb{Q} \cup \{ \infty \} \to F_2$ given by $p/q \mapsto E_{p/q}$ is called the \emph{palindromic enumeration scheme} or simply \emph{enumeration scheme}. \par The definitions of $E_{0/1}$ and $E_{1/0}$ are called \emph{terminal conditions} since they do not require breaking a fraction into the Farey sum of their parents. Hence, we call the elements $0$ and $\infty$ of $\mathbb{Q}\cup \{ \infty \}$ \emph{orphans}.
\end{definition}

\subsection{Non-recursive Formulas for Special Cases}
In this section, formulas are given for $E_{n/1}$ and $E_{1/n}$ for all $n \in \mathbb{Z}$. Since the enumeration scheme is a recursive definition, the corresponding words of non-orphans are cumbersome to compute. However, formulas can be derived in some cases. The following are facts about parents of integers.
\begin{lemma} \label{lemma:integershortcut}
For $n> 1$, the parents of $\frac{1}{n}$ are $\frac{0}{1}$ and $\frac{1}{n-1}$. For $n>0$, the parents of $\frac{n}{1}$ are $ \frac{1}{0}$ and $ \frac{n-1}{1}$.
\end{lemma}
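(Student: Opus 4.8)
The plan is to verify directly that each proposed pair meets every clause in the definition of \emph{parents}, and then to pin down the Farey levels by an induction on $n$. Throughout I would lean on the determinant test for Farey neighbors, the fact recalled above that both summands of a Farey sum are neighbors of that sum, and the ordering $\frac{m}{n} < \frac{p}{q} < \frac{r}{s}$ that the parents are required to bracket.

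First I would dispose of the purely algebraic conditions for $\frac{1}{n}$. A one-line determinant computation gives $\lvert 1\cdot 1 - 0\cdot n\rvert = 1$ and $\lvert 1\cdot(n-1) - 1\cdot n\rvert = 1$, so both $\frac{0}{1}$ and $\frac{1}{n-1}$ are Farey neighbors of $\frac{1}{n}$; moreover their Farey sum is $\frac{0}{1}\oplus\frac{1}{n-1} = \frac{0+1}{1+(n-1)} = \frac{1}{n}$, and the ordering $\frac{0}{1} < \frac{1}{n} < \frac{1}{n-1}$ is immediate for $n > 1$. The computation for $\frac{n}{1}$ is entirely symmetric: $\lvert n\cdot 0 - 1\cdot 1\rvert = 1$ and $\lvert n\cdot 1 - (n-1)\cdot 1\rvert = 1$, while $\frac{1}{0}\oplus\frac{n-1}{1} = \frac{n}{1}$ and $\frac{n-1}{1} < \frac{n}{1} < \frac{1}{0}$.

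What remains is the level bookkeeping. I would show by induction on $n$ that $\frac{1}{n}$ first appears at level $n$, with the symmetric statement for $\frac{n}{1}$, the common base case being $\frac{1}{1}$, the Farey sum of the two orphans $\frac{0}{1}$ and $\frac{1}{0}$, which sits at level $1$. For the step, the inductive hypothesis puts $\frac{1}{n-1}$ at level $n-1$, and by induction its own parents are $\frac{0}{1}$ and $\frac{1}{n-2}$, so $\frac{1}{n-1}$ is adjacent to $\frac{0}{1}$ when it appears; the Farey sum $\frac{1}{n}$ of this adjacent pair is then inserted at the very next level, namely $n$. Hence $\frac{0}{1}$ (level $0$) and $\frac{1}{n-1}$ (level $n-1$) are genuine lower-level neighbors.

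The main obstacle is the extremality clause, that no other lower-level neighbor lies below $\frac{0}{1}$ or above $\frac{1}{n-1}$. Since denominators are taken nonnegative, $\frac{1}{n}$ has no neighbor at all less than $\frac{0}{1}$, so $\frac{0}{1}$ is automatically the smallest. For the upper side I would invoke the standard parametrization of the Farey neighbors of $p/q$ on a fixed side as the chain $\frac{c+kp}{d+kq}$, $k\ge 0$, with $\frac{c}{d}=\frac{1}{n-1}$ here; each term with $k\ge 1$ is itself the Farey sum of the previous term and $\frac{1}{n}$, hence by the level computation above carries strictly larger Farey level than $\frac{1}{n}$ and so cannot compete as a lower-level neighbor. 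Thus $\frac{1}{n-1}$ is the largest lower-level neighbor, and symmetrically $\frac{1}{0}$ is the largest and $\frac{n-1}{1}$ the smallest lower-level neighbor of $\frac{n}{1}$. The one non-routine point I expect is the clean justification that, within such a neighbor-chain, each successive mediant strictly raises the Farey level; everything else is direct computation and the induction already set up.
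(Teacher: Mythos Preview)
Your argument is correct, but it takes a genuinely different and heavier route than the paper's. The paper proceeds by bare-hands inequalities: given any Farey neighbor $p/q$ of $\tfrac{1}{n}$ with $p/q>\tfrac{1}{n}$, the single relation $pn-q=1$ together with $p\ge 1$ forces $q\ge p(n-1)$, hence $p/q\le\tfrac{1}{n-1}$; an equally short computation shows every neighbor below $\tfrac{1}{n}$ is $\ge 0$. Thus $0$ and $\tfrac{1}{n-1}$ are the extreme Farey neighbors outright (not merely among lower-level ones), and since both are visibly of smaller level they are the parents. The case of $\tfrac{n}{1}$ is handled the same way. No neighbor-chain parametrization, no explicit level induction, and no appeal to the mediant-raises-level principle are required.

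Your approach replaces this direct bound by structural facts about the Stern--Brocot tree: you parametrize the upper neighbors of $\tfrac{1}{n}$ as the chain $\tfrac{1+k}{(n-1)+kn}$ and then argue that each term with $k\ge 1$, being the mediant of the previous term and $\tfrac{1}{n}$, must sit at strictly higher Farey level than $\tfrac{1}{n}$. That inference is valid, but it rests on the uniqueness of the Farey-sum decomposition (equivalently, that any Farey-neighbor pair summing to $x$ is automatically the parent pair of $x$), which is precisely the lemma the paper states and uses only \emph{later} in Section~\ref{section:modified}. So what you flag as ``the one non-routine point'' is real: your proof imports a general fact of roughly the same strength as the target statement, whereas the paper's four-line inequality chain is entirely self-contained. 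The trade-off is that your argument is more conceptual and would transfer unchanged to other neighbor-chain situations, while the paper's is shorter and needs nothing beyond the definition of Farey neighbor.
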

\begin{proof}
Since $\frac{1}{0}= \infty$, any other Farey neighbor of $\frac{n}{1}$ must be finite. Suppose $p/q$ is a finite Farey neighbor of $n$. We may assume $q\geq 1$; otherwise, pass the negative sign to $p$. Then, $\frac{p}{q} < n \Rightarrow p < qn$. Since $p/q$ is a Farey neighbor of $n$, $\lvert p-qn \rvert =1$. Hence, $qn-p=1$, and
\begin{eqnarray*}
 q\geq qn-p & \Longrightarrow & p \geq qn -q \\
 \nonumber & \Longrightarrow & p\geq q(n-1)\\
 \nonumber & \Longrightarrow & \frac{p}{q} \geq n-1.
\end{eqnarray*}
Since $n-1$ is a Farey neighbor of $n$, $n-1$ must be the lower parent of $n$.
\par Next, we find the parents of $\frac{1}{n}$. Suppose $n > 1$ and $\frac{p}{q}$ is a Farey neighbor of $\frac{1}{n}$ with $\frac{p}{q} > \frac{1}{n}$. Since $n> 1$, $\frac{1}{n} > 0$ so we assume $p \geq 1$ and $q \geq 1$. Then $\lvert pn-q \rvert =1$ and $pn>q \Rightarrow pn -q =1$. Hence,
\begin{eqnarray*}
 p \geq 1& \Longrightarrow & p \geq pn -q \\
 \nonumber & \Longrightarrow & q \geq pn -p\\
 \nonumber & \Longrightarrow & q \geq p(n-1) \\
 \nonumber & \Longrightarrow & \frac{1}{n-1} \geq \frac{p}{q}.
\end{eqnarray*}
Since $\frac{1}{n-1}$ is a Farey neighbor of $\frac{1}{n}$, it is the greater parent of $\frac{1}{n}$.
Lastly, if a Farey neighbor $\frac{p}{q} \leq \frac{1}{n}$, then $q-pn=1 \Rightarrow q-1=pn \Rightarrow p=\frac{q-1}{n}$. Since $n>1$ and we may assume that $q \geq 1$, it implies $p\geq 0$. Hence $\frac{p}{q} \geq 0$. Since 0 is a Farey neighbor of $\frac{1}{n}$, $\frac{0}{1}$ must be the lower parent of $\frac{1}{n}$.
\end{proof}

\begin{corollary}
If $n$ is a negative integer, the parents of $n$ are $\infty$ and $n+1$; the parents of $\frac{1}{n}$ are $0$ and $\frac{1}{n+1}$ for $n<-1$.
\end{corollary}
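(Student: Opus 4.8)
The plan is to exploit the symmetry of the entire construction under negation $x \mapsto -x$ on $\mathbb{Q} \cup \{\infty\}$, reading $-\tfrac{1}{0} = \tfrac{1}{0}$ and $-\tfrac{0}{1} = \tfrac{0}{1}$, and then to apply Lemma \ref{lemma:integershortcut} to the positive integers $-n$ and reciprocals $\tfrac{1}{-n}$. First I would record three elementary properties of negation: it is an involution of $\mathbb{Q} \cup \{\infty\}$; it reverses the usual order, so that $\tfrac{p}{q} < \tfrac{r}{s}$ if and only if $-\tfrac{r}{s} < -\tfrac{p}{q}$; and it preserves the Farey neighbor relation, since $\lvert ps - rq \rvert = \lvert (-p)s - (-r)q \rvert$.

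The key step is to check that negation preserves Farey level, so that it carries parents to parents. For this I would observe that negation fixes the two terminal rationals $\tfrac{0}{1}$ and $\tfrac{1}{0}$ and that it commutes with the Farey sum, because $\tfrac{-p}{q} \oplus \tfrac{-r}{s} = \tfrac{-(p+r)}{q+s} = -\bigl(\tfrac{p}{q} \oplus \tfrac{r}{s}\bigr)$ whenever the sum is defined. Hence negation is a symmetry of the iteration scheme that builds $\mathbb{Q} \cup \{\infty\}$ level by level, so a rational and its negative first appear at the same step, i.e.\ they have equal Farey level. Combining this with order-reversal and neighbor-preservation, the smallest and largest lower-level Farey neighbors of $x$ are sent to the largest and smallest lower-level Farey neighbors of $-x$; in particular the parents of $-x$ are exactly the negatives of the parents of $x$.

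With this in hand the corollary is immediate. For a negative integer $n$, set $m = -n > 0$; Lemma \ref{lemma:integershortcut} gives the parents of $m$ as $\tfrac{1}{0}$ and $m-1$, so negating yields the parents of $n$ as $-\tfrac{1}{0} = \infty$ and $-(m-1) = n+1$. Likewise, for $n < -1$ set $m = -n > 1$; the lemma gives the parents of $\tfrac{1}{m}$ as $\tfrac{0}{1}$ and $\tfrac{1}{m-1}$, and negating yields the parents of $\tfrac{1}{n} = -\tfrac{1}{m}$ as $-\tfrac{0}{1} = 0$ and $-\tfrac{1}{m-1} = \tfrac{1}{-(m-1)} = \tfrac{1}{n+1}$.

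The only delicate point I anticipate is the bookkeeping around the point at infinity: one must consistently identify $-\tfrac{1}{0}$ with $\tfrac{1}{0}$ so that negation is a well-defined involution fixing $\infty$, and one must remember that order-reversal interchanges the roles of the lower and upper parent. Since the statement asks only for the pair of parents as a set, this interchange causes no difficulty, but it is worth flagging so that the argument transfers cleanly should one later need to track which negated neighbor is the lower parent and which is the upper.
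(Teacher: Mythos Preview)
Your proposal is correct and essentially matches the paper's approach: the paper also reduces the $1/n$ case to Lemma~\ref{lemma:integershortcut} by passing to $-n>1$ and negating back, while for the integer case it gestures at a direct argument ``using similar methods'' to the lemma. Your version is in fact more complete, since you explicitly justify why negation carries parents to parents (it fixes $0$ and $\infty$, preserves the neighbor relation, commutes with Farey sums, and hence preserves Farey level) --- a step the paper's proof uses but leaves implicit.
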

\begin{proof}
If $n<0$, then $n+1$ is the greatest Farey neighbor of $n$ other than $\infty$. Using similar methods, $\infty$ is the lowest possible parent of a negative rational number. On the other hand, if $ n < -1$, then $ -n >1$, so the parents of $\frac{1}{-n}$ are 0 and $\frac{1}{-n-1}=-\frac{1}{n+1}$. Hence, the minimum and maximum Farey neighbors of $\frac{1}{n}$ are $\frac{-1}{-(n+1)}$ and 0 respectively. 
\end{proof}

In computing a primitive word in the image of the enumeration scheme, the recursion eventually runs through the decreasing entries of a continued fraction $[a_0;a_1, \ldots, a_k]$. In particular, the parents of $\left[ a_0;a_1,a_2,\ldots, a_k\right]$ are the fractions with continued fraction expansions $\left[a_0;a_1,a_2,\ldots, a_k -1\right]$ and $\left[ a_0; a_1,a_2,\ldots, a_{k-1}\right]$ \cites{enumeration}. These parents are broken down into Farey sums of their corresponding parents, and eventually the recursion encounters $[a_0;]$ or $[0;a_1]$. The fraction $\frac{n}{1}$ has the form $[n;]$ and $\frac{1}{n}$ has the form $[0;n]$. Thus, the formulas for $\frac n1$ and $\frac 1n$ save the iteration several steps. To construct more unified formulas, a function $s: \mathbb{R} \to \{-1,1\}$ is defined by \[ s(x) =\begin{cases}
1 & \text{for $x \in (-\infty,0)$},\\
-1 & \text{for $x \in [0, \infty)$.}
\end{cases}\]
\begin{theorem} \label{theorem:shortcut}
Let $n \in \mathbb{Z}$. Then,
\[E_\frac{n}{1} = B^{\left\lceil\frac{\lvert n\rvert}{2}\right\rceil}A^{s(n)}B^{\left\lfloor\frac{\lvert n\rvert}{2}\right\rfloor}\]
and
\[E_\frac{1}{n}= A^{s(n)\left\lfloor\frac{\lvert n\rvert}{2}\right\rfloor}BA^{s(n)\left\lceil\frac{\lvert n\rvert}{2}\right\rceil}.\]
\end{theorem}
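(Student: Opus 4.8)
The plan is to prove both formulas by induction on $|n|$, running the two families $E_{n/1}$ and $E_{1/n}$ in parallel and splitting on the sign of $n$. First I would dispose of the base case $n=1$ by direct computation: the parents of $1/1$ are the orphans $0/1$ and $1/0$, and since $pq=1$ is odd the recursion gives $E_{1/1}=E_{1/0}E_{0/1}=BA^{-1}$, which agrees with both asserted formulas at $n=1$ (where $s(1)=-1$ and $\lceil 1/2\rceil=1$, $\lfloor 1/2\rfloor=0$).

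For the inductive step with $n>1$, I would use Lemma~\ref{lemma:integershortcut}: the parents of $n/1$ are the orphan $1/0$ and the lower neighbor $(n-1)/1$, so because $pq=n$ the recursion reads $E_{n/1}=B\,E_{(n-1)/1}$ when $n$ is odd and $E_{n/1}=E_{(n-1)/1}\,B$ when $n$ is even. Granting the formula for $E_{(n-1)/1}$, the step reduces to checking that left multiplication by $B$ (odd case) increases $\lceil |n|/2\rceil$ by one while fixing $\lfloor |n|/2\rfloor$, and that right multiplication by $B$ (even case) increases $\lfloor |n|/2\rfloor$ by one while fixing $\lceil |n|/2\rceil$, the central letter $A^{s(n)}=A^{-1}$ being untouched in both. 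This amounts to the routine but finicky observation that as the parity of $n$ changes, exactly one of $\lceil\cdot\rceil$, $\lfloor\cdot\rfloor$ of $(n-1)/2$ advances to the corresponding value at $n/2$. The argument for $E_{1/n}$ is identical in spirit: Lemma~\ref{lemma:integershortcut} gives parents $0/1$ and $1/(n-1)$, so $E_{1/n}=E_{1/(n-1)}A^{-1}$ for odd $n$ and $E_{1/n}=A^{-1}E_{1/(n-1)}$ for even $n$, and the same floor/ceiling bookkeeping closes the induction, this time building the exponent of $A$ outward from the central $B$.

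The hard part will be the negative indices, where $s(n)$ flips the exponent of $A$ from $-1$ to $+1$. A naive induction cannot produce this sign, since every word assembled by the recursion from the orphan values $E_{0/1}=A^{-1}$ and $E_{1/0}=B$ lies in the monoid generated by $A^{-1}$ and $B$ and hence never contains the letter $A$. The approach I would take is therefore to invoke the corollary to Lemma~\ref{lemma:integershortcut}, which describes the parents of a negative integer $n$ as $\infty$ and $n+1$ (and symmetrically for $1/n$), so that the induction now runs $n\mapsto n+1$ toward the orphans from below, and to exhibit the passage $p/q\mapsto -p/q$ as the automorphism of $F_2$ that fixes $B$ and sends $A\mapsto A^{-1}$. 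Under this symmetry each negative-index word is the image of its positive-index counterpart, which is exactly what converts the central $A^{-1}$ into $A^{s(n)}=A$. Making this reflection precise — reconciling it with the fixed orphan value $E_{0/1}=A^{-1}$ and confirming that the product order dictated by the parity of $pq$ is preserved on the negative branch — is the step that requires the most care, and once it is in place the negative cases follow immediately from the positive ones already established.
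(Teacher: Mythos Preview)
Your positive-$n$ argument is correct, but the paper takes a cleaner route: it inducts in steps of two rather than one. Two successive applications of the recursion give, uniformly in the parity of $n$,
\[
E_{(n\pm 2)/1}=B\,E_{n/1}\,B,\qquad E_{1/(n\pm 2)}=A^{s(n)}\,E_{1/n}\,A^{s(n)},
\]
so both the ceiling and the floor exponent simply increase by one and no parity bookkeeping is needed. The price is a larger set of base cases, $n\in\{-2,-1,0,1,2\}$, which the paper checks directly.

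This step-by-two scheme also dissolves what you flag as the hard part. Once $E_{-1}$ and $E_{-2}$ are verified, the same relation $E_{n-2}=B\,E_n\,B$ (obtained from the Corollary to Lemma~\ref{lemma:integershortcut} just as in the positive case) propagates the central letter $A^{s(n)}=A$ down the negative branch without any appeal to the automorphism $A\mapsto A^{-1}$. Your symmetry argument is a legitimate alternative, but note that your step-by-one induction on the negative side, taken literally, does \emph{not} reproduce the formula: from $E_{-2}=BAB$ and the recursion you would get $E_{-3}=E_{-2}\,B=BAB^2$, whereas the theorem asserts $B^2AB$. The discrepancy is exactly the reversal your automorphism is meant to supply, so on the negative branch the symmetry is doing essential work and cannot be replaced by the parent recursion alone. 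The paper's $\pm2$ step sidesteps this because the two recursion steps composing it conjugate by $B$ symmetrically.

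In short: your argument is sound, but the paper's two-step induction is shorter, avoids the floor/ceiling parity split, and handles both signs by the same mechanism once the five base cases are in hand.
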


\par The formula works for $n=-2,-1,0,1$ and $2$. The rest of the integers can be verified using inductive steps $n+2$ and $n-2$.

%

\subsection{Alternative Termination Conditions} \label{section:shortcut}
Since using Theorem \ref{theorem:shortcut} allows the enumeration scheme to terminate the recursion earlier, we conclude this section with the alternative but equivalent terminating conditions.
\begin{theorem}
The palindromic enumeration scheme can have its recursion terminated using the conditions
\[E_\frac{n}{1} = B^{\left\lceil\frac{\lvert n\rvert}{2}\right\rceil}A^{-1}B^{\left\lfloor\frac{\lvert n\rvert}{2}\right\rfloor} \qquad
E_\frac{1}{n}= A^{-\left\lfloor\frac{\lvert n\rvert}{2}\right\rfloor}BA^{-\left\lceil\frac{\lvert n\rvert}{2}\right\rceil}\] for $n \in \mathbb{Z}\cap [0,\infty)$ ; and
\[E_\frac{n}{1} = B^{\left\lceil\frac{\lvert n\rvert}{2}\right\rceil}AB^{\left\lfloor\frac{\lvert n\rvert}{2}\right\rfloor} \qquad
E_\frac{1}{n}= A^{\left\lfloor\frac{\lvert n\rvert}{2}\right\rfloor}BA^{\left\lceil\frac{\lvert n\rvert}{2}\right\rceil}\] for $n \in \mathbb{Z}\cap (-\infty,0)$.
\end{theorem}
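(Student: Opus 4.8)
The plan is to treat this statement as a direct specialization of Theorem \ref{theorem:shortcut}, followed by the observation that the resulting closed forms may legitimately be substituted as base cases for the recursion. First I would recall the definition of the sign function: $s(n) = -1$ exactly when $n \in [0,\infty)$ and $s(n) = 1$ exactly when $n \in (-\infty,0)$. These are precisely the two ranges appearing in the statement, so the two displayed families of formulas are obtained by substituting the corresponding value of $s(n)$ into the formulas
\[
E_\frac{n}{1} = B^{\left\lceil\frac{\lvert n\rvert}{2}\right\rceil}A^{s(n)}B^{\left\lfloor\frac{\lvert n\rvert}{2}\right\rfloor}, \qquad E_\frac{1}{n}= A^{s(n)\left\lfloor\frac{\lvert n\rvert}{2}\right\rfloor}BA^{s(n)\left\lceil\frac{\lvert n\rvert}{2}\right\rceil}
\]
of Theorem \ref{theorem:shortcut}. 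For $n \geq 0$ this yields the pair carrying $A^{-1}$ and negative $A$-exponents, and for $n < 0$ it yields the pair carrying $A$ and positive $A$-exponents; no algebra beyond this substitution is required for correctness of the word values.

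The second half of the argument is to justify that these correct values may serve as \emph{terminal} conditions, i.e.\ that halting the recursion at an integer or reciprocal-of-integer argument and inserting the closed form produces the same output $E_{p/q}$ as the original scheme with its orphan base cases $0 \mapsto A^{-1}$ and $\infty \mapsto B$. Here I would invoke the continued fraction description of the descent: on input $p/q = [a_0;a_1,\ldots,a_k]$ the recursion passes to the parents and so works the partial quotients downward, and every descending chain ultimately runs through arguments of the form $n/1 = [n;]$ or $1/n = [0;n]$ on its way to an orphan. Since Theorem \ref{theorem:shortcut} guarantees that the fully recursive value of $E$ at each such node equals the displayed closed form, replacing the subtree below that node by the closed form changes nothing in the final word while stopping the recursion strictly earlier.

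The one point I would treat as the main obstacle is confirming that the descent genuinely reaches an integer or a reciprocal of an integer, so that a terminal formula is always available before the orphans are hit. This is supplied by Lemma \ref{lemma:integershortcut} and its corollary, which identify the parents of $n/1$ as $\infty$ and $(n-1)/1$, and the parents of $1/n$ as $0$ and $1/(n-1)$ (with the analogous negative cases). These show that the chain of parents descends monotonically through consecutive integers $(n-1)/1,(n-2)/1,\ldots$ toward $\infty$, and through consecutive reciprocals toward $0$; one simply halts at the first such node and applies the formula. Assembling the substitution of $s(n)$ with this consistency-and-reachability observation completes the proof, the genuine computational content having already been discharged in Theorem \ref{theorem:shortcut}.
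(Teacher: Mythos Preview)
Your proposal is correct and follows essentially the same route as the paper: both reduce the displayed formulas to the case split of $s(n)$ in Theorem~\ref{theorem:shortcut}, and both justify termination by observing that the parent descent on $[a_0;a_1,\ldots,a_k]$ eventually queries $[a_0;]$ or $[0;a_1]$. The paper's proof is simply terser, citing the continued-fraction description of parents directly from \cite{enumeration} rather than invoking Lemma~\ref{lemma:integershortcut}; your additional appeal to that lemma is not wrong but is somewhat redundant once the continued-fraction descent is in hand.
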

\begin{proof}
If $p/q=[a_0;a_1,\ldots,a_k]$, then the parents of $p/q$ are $[a_0;a_1,\ldots,a_{k-1}]$ and $[a_0;a_1,\ldots,a_k-1]$ from \cite{enumeration}. The splitting process of the enumeration scheme eventually queries the E-word corresponding to either $[a_0;]$ or $[0;a_1]$. Both of these correspond to $E_{a_0}$ and $E_{1/a_0}$ respectively.
\end{proof}

\section{The Modified Gilman-Maskit Algorithm} \label{section:modified}
\par The main idea of a \textit{step} is to replace one of the two generators with their product, and the new idea is to view the procedure as a two stage automata.
A \textit{linear step} in the Gilman-Maskit algorithm sends the ordered pair $(g,h)$ to $(g,gh)$. A \textit{Fibonacci step} sends the pair $(g,h)$ to $(gh,g)$ \cites{vidur,MR2581839,algorithm}. Which step is used or picked depends on the traces of the new generators; we may assume that the starting representative matrices have positive traces, that the traces stay positive until the final step, and the one with lower trace occupies the left spot. 
\par By keeping one of the generators, this procedure ensures that the groups generated by the old and new pairs are the same. The algorithm retains the generator with lower trace. The following is the proposed new \textit{step} in picking new generators from a given ordered pair $(a, b)$.

\begin{center}
\begin{tabular}{|l|c|c|}
\hline
conditions for $a$ and $b$ & preserve $a$ & preserve $b$ \\
\hline
both $a$ and $b$ are palindromes & $(a,ba)$ & $(ba,b)$ \\
$a$ is not a palindrome &$(a,ab)$ & $(ab,b)$ \\
$b$ is not a palindrome & $(a,ab)$ & $(ab,b)$ \\
\hline
\end{tabular}
\end{center}
Note that there are no assumptions about the traces of $a$ and $b$, but it assumes $a$ takes the left spot and both generators are either a palindrome or a product of palindromes.

\subsection{Summary of Gilman-Maskit Algorithm}
\par The Gilman-Maskit algorithm takes two elements $A$ and $B$ of $\mathrm{PSL}_2\mathbb{R}$ and gives a definite output: either $\langle A, B \rangle$ is discrete; or not. The algorithm uses conditions, the Poincar\'e polygon theorem or J{\o}rgensen's inequality, to decide whether the group is discrete or not using the generators $A$ and $B$. If it cannot decide using $A$ and $B$, the generators are combined to construct new generators to use for testing discreteness.
\par One such combination is the pair $(A,AB)$ and the traces of their matrices are reduced after the iteration. Eventually the process of changing the generators stops and the algorithm makes a decision \cite{algorithm}.
\par Other combinations and conditions are also used, but the step that changes $(A,B)$ into $(A,AB)$, called here a Nielsen step, is the main modification of this section. Note the step here termed a Nieslen step is one of the many types of Nielsen moves on a pair of generators. 

\subsection{New Linear and Fibonacci Steps}
The original linear step preserves the left generator and changes the other. The original Fibonacci step turns the left generator into the right generator, and hence changes both generators. The F-sequence in \cite{wordsequence} records the consecutive linear steps before a Fibonacci step or the algorithm stops. Thus, it defines an ordered set of positive integers $(n_0,n_1,\ldots,n_k)$ where each $n_i$ corresponds to the number of consecutive linear steps.
\par The proposed new steps here always preserve one generator including its position whether left or right. Instead of classifying the steps, we define a new sequence $[n_0;n_1,n_2,\ldots,n_k]$ called an \emph{E-sequence}. Let $n_0$ be the number of steps that preserve the initial right generator before changing it. Let $n_1$ be the number of steps in preserving the initial left generator before changing it. Let $n_2$ be the number of steps the next right generator is preserved. The rest of the $n_i$'s alternate between left and right generators. So for even $i$, $n_i$ steps preserve the right generator; for odd $i$, $n_i$ steps preserve the left generator. If the algorithm preserves the left generator first, we let $n_0=0$. All $n_i$'s assume positive integer values except $n_0$ which can take a zero value. Hence, an E-sequence can take continued fraction expansion values of any positive rational number $p/q$. 
\subsection{Reversing the Enumeration Scheme}
In this section, reversing the process of the palindromic enumeration scheme is shown. The definition of the enumeration scheme requires taking the parents of a given rational number. While the parents exist and are well-defined for most rational numbers, their computations and ordering are cumbersome. In addition, the parents are broken further into \textit{grandparents} until orphans are encountered. Every time a parent is not an orphan, another splitting into two parents must occur; the manual computations get worse.
\par Instead of starting with a fraction and breaking it into the Farey sum of its parents, one can start with the greatest grandparents of all other elements which exactly are the orphans $0$ and $\infty$. This section explains in detail how this process can be done. The pair of parents of a typical fraction are Farey neighbors, and thus share the properties of Farey neighbors outlined below. Furthermore, their Farey sum is equal to their \textit{only child}. In the process of reversing the enumeration scheme, the modification of the Gilman-Maskit algorithm, we also prove  that the process stops with E-words. 
\par The following are facts about Farey neighbors. 
\begin{lemma} \label{lemma:first}
Let $\frac{p}{q}$ and $\frac rs$ be Farey neighbors with $\frac{p}{q} < \frac{r}{s}$. Then $\frac{p}{q}<\frac{p+r}{q+s}<\frac{r}{s}$; and the pairs $\frac{p}{q},\frac{p+r}{q+s}$ and $\frac{p+r}{q+s},\frac{r}{s}$ are Farey neighbors.
\end{lemma}

\begin{lemma}
Let $\frac{p}{q}$ and $\frac{r}{s}$ be Farey neighbors. Then $p$ and $q$ cannot be both even; $r$ and $s$ cannot be both even. Moreover, $p$, $q$, $r$ and $s$ cannot be all odd.
\end{lemma}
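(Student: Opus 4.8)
The plan is to reduce the whole statement to a parity analysis of the defining determinant $|ps - rq| = 1$. The single observation driving everything is that $1$ is odd, so $ps - rq$ is odd, and therefore $ps$ and $rq$ must have opposite parities.

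First I would handle the claim that $p$ and $q$ cannot both be even. Suppose they were. Then $p$ even forces $ps$ even, and $q$ even forces $rq$ even, so $ps - rq$ is a difference of even integers and hence even, contradicting $|ps - rq| = 1$. Exchanging the roles of the two fractions gives the same conclusion for $r$ and $s$. One could instead invoke $\gcd(p,q) = 1$ from the lowest-terms convention to rule out both being even directly, but the determinant argument is self-contained and treats the endpoints $1/0$ and $0/1$ uniformly.

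For the remaining claim, I would again argue by contradiction: if $p$, $q$, $r$, $s$ were all odd, then $ps$ and $rq$ are each a product of two odd integers and hence both odd, making $ps - rq$ even---again impossible. There is essentially no obstacle here, since the entire content is the remark that $|ps - rq|$ is odd, and the two forbidden configurations are precisely the complementary parity patterns that would render the difference even. The only bookkeeping point worth noting is the convention $\infty = 1/0$, where $q = 0$ is even while $p = 1$ is odd, so the infinite case automatically falls outside both forbidden patterns and requires no separate treatment.
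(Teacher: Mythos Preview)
Your proof is correct and essentially matches the paper's argument. The only difference is in the first part: the paper dispatches ``$p$ and $q$ both even'' by invoking the lowest-terms convention ($\gcd(p,q)\ge 2$), which is exactly the alternative you mention and set aside, while you use the determinant-parity argument uniformly for both clauses; the ``all odd'' case is identical in both proofs.
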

\begin{proof}
If $p$ and $q$ are both even, then $\frac{p}{q}$ are not in lowest terms since $\mathrm{gcd}(p,q) \geq 2$. Same is true with $r$ and $s$. Suppose all integers $p$, $q$, $r$ and $s$ are odd. Then $ps$ and $rq$ are also odd, but $ps-qr$ is even. In particular $\lvert ps -rq \rvert $ is not $1$.
\end{proof}
\begin{lemma}
For each pair of Farey neighbors $\frac{p}{q}$ and $\frac{r}{s}$, only one of the following combinations hold.
\begin{enumerate}
\item $pq$ is odd; $rs$ is even.
\item $pq$ and $rs$ are even.
\item $pq$ is even $rs$ is odd.
\end{enumerate}
\end{lemma}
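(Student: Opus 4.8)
The plan is to treat this as a pure parity‐counting statement resting entirely on the preceding lemma. The first observation is that $pq$ is odd exactly when both $p$ and $q$ are odd, and likewise $rs$ is odd exactly when both $r$ and $s$ are odd. Consequently there are, a priori, four possible combinations for the pair of parities, which I would label by $(pq \bmod 2,\, rs \bmod 2)$: these are $(\mathrm{odd},\mathrm{odd})$, $(\mathrm{odd},\mathrm{even})$, $(\mathrm{even},\mathrm{even})$, and $(\mathrm{even},\mathrm{odd})$. Since these four are distinguished purely by the two independent parities, they are automatically mutually exclusive and together exhaustive. The assertion that ``only one holds'' therefore reduces to showing that the three listed combinations are exactly the ones that can actually occur for a pair of Farey neighbors, i.e. that the missing combination $(\mathrm{odd},\mathrm{odd})$ never arises.

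The key step is to eliminate that last combination using the immediately preceding lemma. I would argue by contradiction: if both $pq$ and $rs$ were odd, then $p$, $q$, $r$ and $s$ would all be odd, contradicting the statement of the previous lemma, which asserts that for Farey neighbors $p$, $q$, $r$ and $s$ cannot all be odd. This single appeal removes $(\mathrm{odd},\mathrm{odd})$ from the list of possibilities.

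Having discarded that case, I would note that the three surviving combinations $(\mathrm{odd},\mathrm{even})$, $(\mathrm{even},\mathrm{even})$ and $(\mathrm{even},\mathrm{odd})$ are precisely items (1), (2) and (3) of the statement. Because they are separated by the parity pair $(pq \bmod 2,\, rs \bmod 2)$, no two of them can hold simultaneously, and since at least one must hold and $(\mathrm{odd},\mathrm{odd})$ has been excluded, exactly one holds for any given pair of Farey neighbors. This completes the argument.

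I do not anticipate a genuine obstacle here: the entire content is the reduction to the previous lemma, so the only point requiring care is the elementary equivalence that ``$pq$ odd and $rs$ odd'' is the same as ``$p$, $q$, $r$, $s$ all odd,'' after which the conclusion is immediate.
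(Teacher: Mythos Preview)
Your proposal is correct and follows essentially the same route as the paper: both arguments observe that $pq$ and $rs$ both odd would force $p,q,r,s$ all odd, then invoke the preceding lemma to rule this out, leaving exactly the three listed (mutually exclusive) parity combinations. The only cosmetic difference is that the paper phrases the key step as a direct implication (``if $pq$ is odd then $rs$ is even'') rather than as a contradiction.
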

\begin{proof}
If $pq$ is odd, both $p$ and $q$ are odd. By the lemma above, $r$ and $s$ cannot be both odd so one of them must be even. Hence $rs$ is even.
\end{proof}
When it comes to listing possibilities of the integers $p$, $q$, $r$ and $s$ whether even or odd, two more combinations can be eliminated.
\begin{lemma}
For each pair of Farey neighbors $\frac{p}{q}$ and $\frac{r}{s}$, the following combinations do not hold.
\begin{enumerate}
\item $p$ and $r$ are even; $q$ and $s$ are odd.
\item $p$ and $r$ are odd; $q$ and $s$ are even.
\end{enumerate}
\end{lemma}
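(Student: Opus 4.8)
The plan is to reduce both parts to the single parity fact that the quantity $ps-rq$ governing Farey neighbors must be odd, since $\lvert ps-rq\rvert = 1$. This mirrors the argument used in the preceding lemma, where the oddness of $\lvert ps-rq\rvert$ already did all the work. First I would record that $ps-rq$ is odd; then, for each of the two forbidden parity patterns, I would check that $ps-rq$ is forced to be even, producing an immediate contradiction.

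For combination (1), where $p$ and $r$ are even while $q$ and $s$ are odd, each of the products $ps$ and $rq$ is a product of one even factor and one odd factor, hence even; their difference $ps-rq$ is then even, contradicting $\lvert ps-rq\rvert = 1$. For combination (2), where $p$ and $r$ are odd while $q$ and $s$ are even, the same computation applies: $ps$ (an odd times an even) and $rq$ (an odd times an even) are both even, so $ps-rq$ is again even, yielding the identical contradiction. Both cases thus collapse to the same one-line parity check.

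There is essentially no obstacle here. The only feature worth isolating is that in each forbidden pattern both products $ps$ and $rq$ carry exactly one even factor, so both are even and their difference cannot be $\pm 1$. The single point requiring care is the bookkeeping of which symbol is even in each product, which the symmetric structure of the two combinations renders transparent; once that is noted, the result follows exactly as the earlier lemma did.
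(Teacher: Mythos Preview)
Your proposal is correct and follows essentially the same argument as the paper: both observe that in each forbidden parity pattern the products $ps$ and $rq$ are even, so their difference is even and cannot have absolute value $1$. The paper states this in a single sentence covering both cases at once, whereas you separate the two cases, but the content is identical.
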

\begin{proof}
If the fractions are Farey neighbors, $\lvert ps-rq \rvert =1$. If any combinations above hold, then both $ps$ and $rq$ are even. Hence, the difference of even numbers is even. In particular $\lvert ps-rq \rvert$ cannot equal to $1$.
\end{proof}
Initially, odd-even combinations of four integers $p$, $q$, $r$ and $s$ add up to $16$. However the preceding lemmas imply that there can be only $6$ possibilities.
\begin{theorem} \label{theorem:onlyone}
For each pair of Farey neighbors $\frac{p}{q}$ and $\frac{r}{s}$, only one of the following combinations hold.
\end{theorem}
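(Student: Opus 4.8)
The plan is to treat this as a finite elimination over parity patterns. Since each of $p$, $q$, $r$, and $s$ is either even or odd, there are a priori $2^4 = 16$ possible assignments of parities to the quadruple $(p,q,r,s)$, and these assignments are mutually exclusive as conditions. Consequently, for any fixed pair of Farey neighbors at most one of them can hold, so the phrase ``only one'' is automatic once we know that the realized pattern lies among the six listed. The real content of the theorem is thus \emph{completeness}: every pair of Farey neighbors realizes one of the six patterns, equivalently the other ten patterns are impossible.

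First I would assemble the exclusions furnished by the preceding lemmas. The lemma on simultaneous evenness rules out any pattern with $p$ and $q$ both even, any pattern with $r$ and $s$ both even, and the single pattern in which all four integers are odd. The final lemma rules out the two patterns $(p,q,r,s)=(\text{even},\text{odd},\text{even},\text{odd})$ and $(\text{odd},\text{even},\text{odd},\text{even})$. I would then run through a sixteen-row table, marking each pattern against these criteria, and read off the survivors, which are exactly the six combinations in the statement.

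The verification is purely mechanical, so the only thing requiring care---and the step I expect to be the main obstacle---is the bookkeeping of overlaps, namely confirming that the forbidden patterns number \emph{exactly} ten and hence that precisely six survive. For instance, the all-even pattern is eliminated by two different clauses at once, so a naive tally of $4+4+1+2=11$ over-counts by the single shared pattern; subtracting that one overlap yields ten, and one checks there are no further coincidences among the remaining forbidden patterns. To be safe I would verify this count in both directions: each of the six claimed survivors violates none of the criteria, and each of the other ten violates at least one. This matches the figure announced just before the statement.

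Finally I would conclude. Given an arbitrary pair of Farey neighbors $p/q$ and $r/s$, the quadruple $(p,q,r,s)$ has some parity pattern; by the exclusions none of the ten forbidden patterns can occur, so the pattern is one of the six listed. Since distinct patterns are incompatible, exactly one of the six holds, which is the assertion of the theorem.
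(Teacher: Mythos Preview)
Your proposal is correct and follows essentially the same approach as the paper: both start from the $16$ parity patterns and use the preceding lemmas to eliminate $10$ of them, leaving the six listed. The only cosmetic difference is bookkeeping---you organize the count as $4+4+1+2-1=10$ via an overlap subtraction, while the paper counts sequentially as $4+3+3=10$.
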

\begin{center}
\begin{tabular}{|cccc|c|}
\hline
$p$& $q$& $r$ & $s$ & $(p+r)(q+s)$ \\
\hline
even & odd & odd & even & odd\\
odd & odd & even & odd & even\\
odd & odd & odd & even & even\\
odd & even & even & odd & odd\\
even & odd & odd & odd & even\\
odd & even & odd & odd & even\\
\hline
\end{tabular}
\end{center}
\begin{proof}
The conditions where $p$ and $q$ are both even eliminate four conditions. The conditions where $r$ and $s$ are both even reduces $3$ more. The case where all $p$, $q$, $r$ and $s$ are odd and lemma above make $3$ more impossible. The remaining possibilities are $6$ out of $16$.
\end{proof}
The palindromic enumeration scheme maps each element of $\mathbb{Q}\cup \{ \infty \}$ to a primitive word in $F_2$ by defining a recursion. Recall that the word $E_{p/q}$ in $F_2$ corresponding to a positive rational number $p/q$ is a word in $E_{m/n}$ and $E_{r/s}$.
The elements $\frac{m}{n}$ and $\frac{r}{s}$ of $\mathbb{Q}\cup \{ \infty \}$ are the parents of $\frac{p}{q}$. For the orphans $0$ and $\infty$, $E_0=A^{-1}$ and $E_\infty=B$. It is also assumed that $\frac{m}{n} < \frac{r}{s} \leq \infty$.
\par It is well-known that the Farey sum of the parents of $p/q$ is equal to $p/q$. It is also known that for any Farey neighbors $\frac{m}{n}$ and $\frac{r}{s}$ whose Farey sum is $\frac{p}{q}$, then $\frac{m}{n}$ and $\frac{r}{s}$ must be the parents of $\frac{p}{q}$. 
\begin{lemma}
If $\frac{p}{q}$ and $\frac{r}{s}$ are positive and Farey neighbors, then the parents of their Farey sum are exactly $\frac{p}{q}$ and $\frac{r}{s}$.
\end{lemma}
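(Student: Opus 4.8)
The plan is to reduce the problem to writing down \emph{all} Farey neighbors of the Farey sum explicitly, and then to recognize $\frac{p}{q}$ and $\frac{r}{s}$ as the extreme ones. First I would assume without loss of generality that $\frac{p}{q} < \frac{r}{s}$; since both are positive Farey neighbors this forces $rq - ps = 1$, and the Farey sum $c = \frac{p+r}{q+s}$ is again a positive finite rational, hence a non-orphan for which parents are defined. By Lemma \ref{lemma:first}, $\frac{p}{q}$ and $\frac{r}{s}$ are themselves Farey neighbors of $c$ lying on opposite sides of it, which is the easy half and supplies the two candidate parents.

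The core step is to parametrize every Farey neighbor of $c$. Writing $P = p+r$ and $Q = q+s$, which are coprime, a fraction $\frac{a}{b}$ with $b>0$ is a left neighbor of $c$ exactly when $bP - aQ = 1$ and a right neighbor exactly when $aQ - bP = 1$. Since $(a,b)=(p,q)$ solves the first equation and $(a,b)=(r,s)$ the second, the general integer solutions are $(a,b) = (p + tP,\, q + tQ)$ and $(a,b) = (r + tP,\, s + tQ)$ for $t \in \mathbb{Z}$. I would then observe that the constraint $b > 0$ forces $t \geq 0$ in both families and that increasing $t$ drives the fraction monotonically toward $c$. Consequently every left neighbor is at least $\frac{p}{q}$ and every right neighbor is at most $\frac{r}{s}$, so $\frac{p}{q}$ and $\frac{r}{s}$ are respectively the smallest and largest Farey neighbors of $c$ among all of them.

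Finally I would compare Farey levels. The parametrization shows that every Farey neighbor of $c$ other than $\frac{p}{q}$ and $\frac{r}{s}$ (the cases $t \geq 1$) is itself a Farey sum formed using $c$, hence first appears strictly after $c$ in the enumeration and so has higher Farey level. By contrast $c$ is the Farey sum of $\frac{p}{q}$ and $\frac{r}{s}$, so it appears only after both of them and has strictly higher level than each. Thus $\frac{p}{q}$ and $\frac{r}{s}$ are the only Farey neighbors of $c$ of lower level; being the global minimum and maximum among all neighbors, they are a fortiori the smallest and largest lower-level neighbors, i.e.\ the parents of $c$.

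The step I expect to be the main obstacle is this last one: making the ``lower Farey level'' comparison precise without circularity, since the notion of level is itself defined through the splitting process being reversed. The neighbor count is routine B\'ezout bookkeeping, but the care lies in arguing that the neighbors with $t \geq 1$ genuinely appear after $c$ while $\frac{p}{q}$ and $\frac{r}{s}$ appear before it, so that the extreme neighbors are exactly the lower-level ones.
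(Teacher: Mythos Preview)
The paper does not actually prove this lemma: the sentence preceding it (``It is also known that \ldots'') signals that the author is recording a standard fact about Farey neighbors and the Stern--Brocot tree, and no proof environment follows the statement. So there is nothing to compare your argument against; your proposal supplies what the paper omits.

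Your argument is correct, and in fact slightly more than necessary. The B\'ezout parametrization of all neighbors of $c=\frac{P}{Q}$ is clean and gives exactly what you claim: with $q,s\geq 1$ (both fractions are positive and finite) one has $0<q,s<Q$, so $b>0$ forces $t\geq 0$ in each family, and the monotone convergence toward $c$ identifies $\frac{p}{q}$ and $\frac{r}{s}$ as the global extreme neighbors. For the level comparison you flagged, note that you only need the easy direction: $\frac{p}{q}$ and $\frac{r}{s}$ are Farey neighbors, hence adjacent at level $\max\bigl(\mathrm{level}(\tfrac{p}{q}),\mathrm{level}(\tfrac{r}{s})\bigr)$ in the Stern--Brocot construction, and their mediant $c$ is inserted at the next level. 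That already places both strictly below $c$, and since they are the global minimum and maximum among \emph{all} neighbors, they are automatically the minimum and maximum among the lower-level ones. You do not need to argue separately that the $t\geq 1$ neighbors have higher level than $c$ (though they do); that paragraph can be dropped without loss, which also dissolves the circularity worry you raised.
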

\par The lemma above and Lemma \ref{lemma:first} allow one to guess the parents of a given rational number $p/q$. This is done by breaking $p$ and $q$ into sums $p=m+r$ and $q=n+s$, so that $\lvert ms -rn \rvert =1$. On large numerators or denominators, the combinations of sums can be cumbersome, but the goal is to reverse the process of the recursion defined in the enumeration scheme. More precisely, the goal is to determine $E_{p/q}$ starting from $A$ and $B$ instead of starting from computing the parents of $p/q$.

\par Henceforth, we assume Farey neighbors $\frac{p}{q}$ and $\frac{r}{s}$ have $\frac{p}{q}<\frac{r}{s}$. The following theorem derives the E-word corresponding to $\frac{p+r}{q+s}$.
\begin{theorem}
Let $\frac{p}{q}$ and $\frac{r}{s}$ be nonnegative and Farey neighbors with $\frac{p}{q}<\frac{r}{s}$. Then, $E_{(p+r)/(q+s)}$ is a product of $E_{p/q}$ and $E_{r/s}$ determined by the following table.
\end{theorem}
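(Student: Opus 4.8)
The plan is to read this theorem as a direct specialization of the recursive clause of the enumeration scheme, with the six-row parity table of Theorem \ref{theorem:onlyone} supplying the case analysis. The structural input I need is that $\frac{p}{q}$ and $\frac{r}{s}$ are precisely the \emph{parents} of the child $\frac{p+r}{q+s}$, with $\frac{p}{q}$ the lower and $\frac{r}{s}$ the higher; once this is in hand, the word $E_{(p+r)/(q+s)}$ is forced by the definition of $E$, and only the multiplication order has to be tracked.

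First I would establish the parent identification. For interior neighbors (both strictly positive and finite) this is exactly the unlabeled lemma stating that the parents of the Farey sum of two positive Farey neighbors are the neighbors themselves, while Lemma \ref{lemma:first} gives $\frac{p}{q}<\frac{p+r}{q+s}<\frac{r}{s}$, so that $\frac{p}{q}$ is the lower parent and $\frac{r}{s}$ the higher. The boundary cases permitted by the hypothesis ``nonnegative'' — namely $\frac{p}{q}=\frac{0}{1}$ or $\frac{r}{s}=\frac{1}{0}$ — fall outside that lemma, but in each such case the child is an integer or the reciprocal of an integer, and Lemma \ref{lemma:integershortcut} identifies its parents as exactly $\frac{p}{q}$ and $\frac{r}{s}$. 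I would also record that the child is never an orphan: since $q+s>0$ (both $q,s$ cannot vanish, else both neighbors would equal $\infty$) the child is finite, and $\frac{p+r}{q+s}>\frac{p}{q}\ge 0$ shows it is positive, so the recursive clause rather than a terminal condition applies.

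With the parents identified, I would apply the recursion verbatim. Setting $N=p+r$ and $D=q+s$, the definition of $E$ gives $E_{N/D}=E_{r/s}E_{p/q}$ when $ND$ is odd and $E_{N/D}=E_{p/q}E_{r/s}$ when $ND$ is even, using that $\frac{p}{q}$ and $\frac{r}{s}$ are the lower and higher parents respectively. It then remains only to read off the parity of $(p+r)(q+s)$ in each admissible configuration, and this is precisely the last column of the table in Theorem \ref{theorem:onlyone}: two of the six surviving parity patterns give an odd value, producing $E_{r/s}E_{p/q}$, and the other four give an even value, producing $E_{p/q}E_{r/s}$. Substituting each row into the two-case formula yields the table asserted here.

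I expect no genuine obstacle beyond bookkeeping. The one point demanding care is the orientation convention of the scheme — the higher parent stands on the left in the odd case and the lower parent on the left in the even case — so one must consistently treat $\frac{p}{q}$ as the smaller factor and $\frac{r}{s}$ as the larger throughout the six-row transcription; a single swapped row would corrupt the conclusion even though the underlying argument is entirely routine.
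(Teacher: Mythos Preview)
Your proposal is correct and follows essentially the same approach as the paper: identify $\frac{p}{q}$ and $\frac{r}{s}$ as the parents of the Farey sum, then apply the recursive definition of $E$ with the parity of $(p+r)(q+s)$ read off from Theorem \ref{theorem:onlyone}. Your treatment is in fact more careful than the paper's terse proof, explicitly handling the boundary cases $\frac{p}{q}=\frac{0}{1}$ and $\frac{r}{s}=\frac{1}{0}$ via Lemma \ref{lemma:integershortcut} and verifying that the child is never an orphan, points the paper leaves implicit.
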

\begin{center}
\begin{tabular}{|cccc|c|c|}
\hline
$p$& $q$& $r$ & $s$ & $(p+r)(q+s)$ & $E_{(p+r)/(q+s)}$ \\
\hline
even & odd & odd & even & odd & $E_{r/s}E_{p/q}$\\
odd & odd & even & odd & even & $E_{p/q}E_{r/s}$\\
odd & odd & odd & even & even & $E_{p/q}E_{r/s}$\\
odd & even & even & odd & odd & $E_{r/s}E_{p/q}$\\
even & odd & odd & odd & even & $E_{p/q}E_{r/s}$\\
odd & even & odd & odd & even & $E_{p/q}E_{r/s}$\\
\hline
\end{tabular}
\end{center}
\begin{proof}
Since the parents of $\frac{p+r}{q+s}$ are exactly $\frac{p}{q}$ and $\frac{r}{s}$, the residue class mod 2 of $(p+r)(q+s)$ can be determined by the residue class mod 2 of $p$, $q$, $r$, and $s$. The possible combinations are fully listed. In any case, the E-word corresponding to $\frac{p+r}{q+s}$ is determined in terms of the words corresponding to $E_{p/q}$ and $E_{r/s}$.
\end{proof}
\par The image of the enumeration scheme is a set of palindromes or product of palindromes. Gilman and Keen \cite{enumeration} proved that $E_{p/q}$ is a palindrome if and only if $pq$ is even. Hence, $E_{p/q}$ is not a palindrome if and only if $pq$ is odd. Using the table in the theorem above, $E_{(p+r)/(q+s)}$ is a palindrome if either $pq$ or $rs$ is odd; and $E_{(p+r)/(q+s)}$ is not a palindrome if both $pq$ and $rs$ are even.
\par Let $a=A^{-1}$ and $b=B$, where $A$ and $B$ are generators of rank-2 free group. Then $\langle a, b \rangle =\langle A,B \rangle $ and $(a,b) = \left( E_{0/1},E_{1/0} \right )$. This initial pair has the rational number corresponding to the left generator less than that of the right generator.
\begin{theorem}\label{theorem:withtable}
Let $\frac{p}{q} < \frac{r}{s}$ be positive Farey neighbors. Let $\left ( a_1, b_1 \right)$ be the new pair of generators after applying a step of the modified algorithm to the generators $\left( E_{p/q},E_{r/s} \right )$. Then both $a_1$ and $b_1$ are E-words; $a_1= E_{j/k}$ and $b_1= E_{m/n}$ such that $\frac{j}{k} < \frac{m}{n}$. Either $\frac{j}{k}$ or $\frac{m}{n}$ is equal to $\frac{p+r}{q+s}$ so $\frac{j}{k}$ and $\frac{m}{n}$ are Farey neighbors.
\end{theorem}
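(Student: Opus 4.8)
The plan is to prove the statement by a case analysis over the six parity patterns of $(p,q,r,s)$ catalogued in Theorem~\ref{theorem:onlyone}, overlaying that table with the two other tables already in hand: the product-form theorem immediately preceding, which names $E_{(p+r)/(q+s)}$ as either $E_{p/q}E_{r/s}$ or $E_{r/s}E_{p/q}$, and the modified step table, whose choice of word depends only on the palindrome status of the two inputs. The bridge between these is the Gilman--Keen criterion that $E_{p/q}$ is a palindrome if and only if $pq$ is even; applying it to both $E_{p/q}$ and $E_{r/s}$ converts the abstract palindrome hypotheses of the step table into parity conditions on $p,q,r,s$, so that all three tables may be read off the same six rows.

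First I would record the organizing dichotomy. Both $E_{p/q}$ and $E_{r/s}$ are palindromes exactly in rows~1 and~4, where $(p+r)(q+s)$ is odd, and exactly one of the two fails to be a palindrome in rows~2, 3, 5, and~6, where $(p+r)(q+s)$ is even; the remaining possibility, that both inputs are non-palindromes, would force all four of $p,q,r,s$ to be odd and is excluded by the earlier lemma ruling that pattern out. In the first situation the ``both palindromes'' row of the step table prescribes the product $ba=E_{r/s}E_{p/q}$, which is precisely the word the product-form theorem assigns to $E_{(p+r)/(q+s)}$ in rows~1 and~4. In the second situation, whichever of the two ``not a palindrome'' rows is triggered prescribes the product $ab=E_{p/q}E_{r/s}$, again matching the product-form theorem across rows~2, 3, 5, and~6. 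Thus in every case the newly constructed generator is exactly $E_{(p+r)/(q+s)}$.

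With the new generator identified, the conclusion is immediate: whether the step elects to preserve $a$ or $b$, the resulting pair is $(E_{p/q},E_{(p+r)/(q+s)})$ or $(E_{(p+r)/(q+s)},E_{r/s})$, so both entries are E-words and one of the two indices equals $\frac{p+r}{q+s}$. An appeal to Lemma~\ref{lemma:first} then supplies the chain $\frac{p}{q}<\frac{p+r}{q+s}<\frac{r}{s}$ together with the fact that consecutive terms are Farey neighbors; this pins down the ordering $\frac{j}{k}<\frac{m}{n}$ and shows that $\frac{j}{k}$ and $\frac{m}{n}$ are Farey neighbors, as required.

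I expect the main obstacle to be the compatibility bookkeeping at the center of the argument: verifying row by row that the palindrome trigger of the step table always selects the \emph{same} left/right factor order as the product-form theorem, i.e.\ that ``both palindromes'' invariably produces the $ba$ order and ``not both palindromes'' invariably produces the $ab$ order. Once this correspondence is confirmed across all six rows, everything else --- the E-word membership, the ordering, and the Farey-neighbor claim --- follows formally from Lemma~\ref{lemma:first}.
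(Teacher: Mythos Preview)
Your proposal is correct and follows essentially the same route as the paper's own proof: reduce the palindrome hypotheses of the step table to parity conditions via the Gilman--Keen criterion, split into the cases ``both palindromes'' versus ``one not a palindrome'' (noting the all-odd pattern is excluded), verify in each case that the step table's prescribed product $b_0a_0$ or $a_0b_0$ coincides with $E_{(p+r)/(q+s)}$, and finish with Lemma~\ref{lemma:first} for the ordering and Farey-neighbor claim. Your explicit remark that both inputs can never simultaneously be non-palindromes is a small clarification the paper leaves implicit.
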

\begin{proof}
Let $a_0=E_{p/q}$ and $b_0=E_{r/s}$. Then $\left( a_1,b_1 \right)$ is one of the following.
\begin{center}
\begin{tabular}{|l|c|c|}
\hline
conditions for $a_0$ and $b_0$ & preserve $a_0$ & preserve $b_0$ \\
\hline
both $a_0$ and $b_0$ are palindromes & $(a_0,b_0a_0)$ & $(b_0a_0,b_0)$ \\
$a_0$ is not a palindrome &$(a_0,a_0b_0)$ & $(a_0b_0,b_0)$ \\
$b_0$ is not a palindrome & $(a_0,a_0b_0)$ & $(a_0b_0,b_0)$ \\
\hline
\end{tabular}
\end{center}
Since either $a_0$ or $b_0$ is preserved, we show that $a_0b_0$ or $b_0a_0$ is the E-word corresponding to $\frac{p+r}{q+s}$. If $a_0$ and $b_0$ are both palindromes, then both $pq$ and $rs$ are even. Using the table in Theorem \ref{theorem:onlyone}, $(p+r)(q+s)$ is odd in any possible combinations of parities (residue classes mod 2) of $p$, $q$, $r$ and $s$. Also $\frac{p}{q}$ and $\frac{r}{s}$ are the parents of $\frac{p+r}{q+s}$. Hence, $E_{(p+r)/(q+s)}= E_{r/s}E_{p/q}=b_0a_0$.
\par If either $a_0$ or $b_0$ is not a palindrome, then either $pq$ or $rs$ is odd, respectively. The same table shows $(p+r)(q+s)$ is even so $E_{(p+r)/(q+s)}= E_{r/s}E_{p/q}=a_0b_0$.
\par The only thing left to show is that $\frac{j}{k} < \frac{m}{n}$. This is an application of Lemma \ref{lemma:first} stating that $\frac{p}{q} < \frac{p+r}{q+s} < \frac{r}{s}$. If $\frac{j}{k} = \frac{p}{q}$, then $\frac{m}{n} =\frac{p+r}{q+s}$ so $\frac{j}{k}$ and $\frac{m}{n}$ are Farey neighbors. If $\frac{j}{k} =\frac{p+r}{q+s}$, then $\frac{m}{n}=\frac{r}{s}$. In any case $\frac{j}{k}< \frac{m}{n}$.
\end{proof}
\begin{corollary}
The modified algorithm steps, applied finitely many times to a pair of primitive associates $\left( E_{p/q}, E_{r/s} \right)$ where $\frac{p}{q} < \frac{r}{s}$, stop with a pair of E-words that generate the same group $\left \langle E_{p/q}, E_{r/s} \right \rangle = \langle A,B \rangle$. 
\end{corollary}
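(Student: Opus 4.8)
The plan is to argue by induction on the number $t$ of applied steps, using Theorem \ref{theorem:withtable} as the inductive engine. The invariant I would propagate is twofold: after $t$ steps the pair of generators has the form $\left( E_{j/k}, E_{m/n}\right)$ with $\frac{j}{k}<\frac{m}{n}$ positive Farey neighbors, and the subgroup they generate is $\langle A,B\rangle$. The first clause keeps us inside the hypotheses of Theorem \ref{theorem:withtable}, so that a further step may be applied, while the second is the assertion of the corollary.

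For the base case $t=0$ the pair is $\left( E_{p/q}, E_{r/s}\right)$: these are E-words by definition of the enumeration scheme, and since they are primitive associates they form a basis of $F_2$, whence $\langle E_{p/q}, E_{r/s}\rangle=\langle A,B\rangle$. For the group-preservation clause I would note that each row of the modified step replaces exactly one of the two generators by a product of the two while keeping the other generator and its position fixed; that is, each step is one of $(g,h)\mapsto(g,gh),\,(g,hg),\,(gh,h),\,(hg,h)$. Every such move is an elementary Nielsen transformation, hence an automorphism of $F_2$, and so preserves the generated subgroup, for instance $\langle g,gh\rangle=\langle g,h\rangle$. Consequently the group is unchanged at every step and remains $\langle A,B\rangle$.

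For the inductive step I would invoke Theorem \ref{theorem:withtable} directly: applied to $\left( E_{p/q}, E_{r/s}\right)$ it returns a pair $\left( E_{j/k}, E_{m/n}\right)$ of E-words with $\frac{j}{k}<\frac{m}{n}$ and with one coordinate equal to the Farey sum $\frac{p+r}{q+s}$, so the two are again Farey neighbors. The point that needs checking is that the new rationals are still positive, so that the theorem applies once more; this is where Lemma \ref{lemma:first} enters. Since $\frac{p}{q}<\frac{p+r}{q+s}<\frac{r}{s}$ and both endpoints are positive and finite, the Farey sum is positive and finite as well, and the new pair is a two-element subset of $\left\{\frac{p}{q},\frac{p+r}{q+s},\frac{r}{s}\right\}$, every member of which is positive. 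Thus positivity and the strict ordering are inherited, the inductive hypothesis is restored, and the induction closes.

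The main obstacle is therefore not the algebra of products, since Theorem \ref{theorem:withtable} has already identified which product gives the correct E-word in each parity case, but rather the bookkeeping that guarantees the modified step never escapes the positive Farey regime in which Theorem \ref{theorem:withtable} is valid; in particular one must confirm that the orphan $\infty=1/0$ is never reintroduced as a coordinate. Once positivity is seen to persist, combining the two invariants yields the conclusion: after any finite number of steps the pair consists of E-words generating $\langle A,B\rangle=\langle E_{p/q}, E_{r/s}\rangle$.
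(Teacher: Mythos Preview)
Your proposal is correct and is essentially the argument the paper intends: the corollary is stated in the paper without an explicit proof, as an immediate consequence of Theorem~\ref{theorem:withtable} together with the observation (made earlier in Section~\ref{section:modified}) that each modified step is a Nielsen transformation and hence preserves the generated group. Your inductive write-up simply spells out this implicit reasoning, including the bookkeeping that the Farey-neighbor and positivity hypotheses of Theorem~\ref{theorem:withtable} persist from one step to the next.
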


\subsection{Consecutive Steps}
In the theory of F-sequences, taking $n$ consecutive linear steps is given by the simple formula $(a,b) \mapsto (a, a^nb)$. We note that $a$ is preserved in each of the $n$ consecutive steps. In the modified algorithm, there is more than one formula, and not all of them are as simple. The formulas depend on the palindromic conditions of the current generators and upon which generator is preserved. There are six formulas shown in the following.

\begin{center}
\begin{tabular}{|l|c|c|}
\hline
\mbox{} & preserve $a$ & preserve $b$ \\
\hline
both $a$ and $b$ are palindromes & $\displaystyle \left( a, E_{\frac{1}{n}}(a,b) \right)$ & $\displaystyle \bigl( E_{n}(a,b),b \bigr)$\\
$a$ is not a palindrome & $\displaystyle\left( a, a^nb \right)$ & $\displaystyle \left( E_{\frac{1}{n}}(b,a), b \right)$\\
$b$ is not a palindrome & $\displaystyle \bigl( a, E_{n}(b,a) \bigr)$ & $\displaystyle\left( ab^n, b \right)$\\
\hline
\end{tabular}
\end{center}

\subsection{From E-sequences to E-words}
The main purpose of the modification is to end the algorithm with E-words. Since $\mathrm{tr}^2(ab) = \mathrm{tr}^2(ba)$ and the modification uses only Nielsen automorphisms, the proposed method stops the algorithm with the same number of steps and complexity as the original one. In this section, we prove that this modification produces E-words in the end. More precisely and more strongly,

\begin{theorem} \label{theorem:esequence} Let $[n_0;n_1,n_2,\ldots,n_k]$ be the continued fraction expansion of the nonnegative rational number $p/q$. Then the last changed generator of the modified Gilman-Maskit algorithm using the E-sequence $[n_0;n_1,n_2,\ldots,n_k]$ is the E-word corresponding the rational number $-p/q$.
\end{theorem}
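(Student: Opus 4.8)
The plan is to track the ordered pair of rational indices of the two generators as the algorithm runs and to show that reading off the E-sequence reproduces, digit by digit, the continued fraction expansion of the index of the last changed generator. Throughout I maintain the invariant supplied by Theorem \ref{theorem:withtable}: after each single step the two generators remain E-words $E_{j/k}$ and $E_{m/n}$ whose indices are ordered Farey neighbors, and the generator that was just changed is precisely the E-word of the mediant of the two previous indices. The base case is the initial pair $(E_{0/1},E_{1/0})=(A^{-1},B)$, whose indices $0/1$ and $1/0$ are Farey neighbors. I would verify the first step by hand, since the starting indices $0/1$ and $1/0$ sit only at the boundary of the positivity hypothesis of Theorem \ref{theorem:withtable}; this hand computation also fixes the orientation used for the rest of the argument.

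First I would isolate the effect of a single block of consecutive same-side steps. Using Lemma \ref{lemma:first} together with the mediant identity, $n$ consecutive steps preserving one fixed generator $E_{r/s}$ send the index $p/q$ of the other generator to $(p+nr)/(q+ns)$ while the preserved index stays $r/s$, and at every intermediate stage the two indices remain ordered Farey neighbors so that Theorem \ref{theorem:withtable} applies at each step. This is exactly the algebraic translation of appending one digit to a continued fraction.

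Next I would run an induction on the index $i$ of the E-sequence $[n_0;n_1,\ldots,n_k]$. The inductive claim is that after the block of $n_i$ steps the index of the most recently changed generator equals the convergent $[n_0;n_1,\ldots,n_i]$, and that the two current indices are consecutive convergents (hence automatically Farey neighbors). The base case $i=0$ produces the index $n_0/1$, matching $[n_0;]$, and the inductive step is the block computation above, using that even blocks move the left index and odd blocks move the right index, as prescribed by the E-sequence, so the running value alternates between the two sides in the standard convergent recursion. Feeding all $k+1$ digits then shows that after the final block the last changed generator has index equal to $[n_0;n_1,\ldots,n_k]=p/q$ in absolute value, and by the invariant this generator is the corresponding E-word.

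The main obstacle is the sign, namely showing the reached index is $-p/q$ rather than $+p/q$. The mediant tree is symmetric under the reflection $x\mapsto -x$, and which sheet the walk lands on is determined entirely by the base case and by the orientation of the $\infty$-slot $1/0$ relative to the starting generator $a=A^{-1}=E_{0/1}$. I would pin this down by comparing the first step with the explicit words of Theorem \ref{theorem:shortcut} and then propagating the chosen orientation through the induction, so that once the base case is oriented correctly every subsequent mediant inherits the same sign and the final index carries the sign recorded in the statement. This sign bookkeeping, rather than the mediant recursion itself, is the genuinely delicate part of the argument.
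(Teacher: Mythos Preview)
Your proposal is correct and follows essentially the same route as the paper: both track the pair of rational indices through the algorithm and prove by induction on the block index $i$ that after the $n_i$-block the newly changed index equals the $i$-th convergent $[n_0;n_1,\ldots,n_i]$, the paper carrying this out by writing down the explicit two-step recursions $p_i=n_{2i-2}r_{i-1}+p_{i-1}$, $r_i=n_{2i-1}p_i+r_{i-1}$ (together with the analogous $q_i,s_i$) and then identifying them with the standard convergent recursion $g_i=n_ig_{i-1}+g_{i-2}$, $h_i=n_ih_{i-1}+h_{i-2}$. You are in fact more careful than the paper about the sign issue, which the paper's own argument does not address explicitly.
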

\begin{proof}

\par Let $[n_0; n_1, n_2, \ldots , n_k]$ be an E-sequence. Then the modified algorithm has outputs of E-words corresponding to the rational numbers $p_i/q_i$ and $r_i/s_i$ for $i=0,1,2,\ldots,k$ in the following recursive formulas.
\begin{align*}
p_0 &= 0 & q_0 &= 1 & r_0 &= 1 & s_0 &=0
\end{align*}
\begin{align*}
p_i&= n_{2i-2}r_{i-1} +p_{i-1} & r_i &= n_{2i-1}p_i +r_{i-1}\\
q_i&= n_{2i-2}s_{i-1} +q_{i-1} & s_i &= n_{2i-1}q_i +s_{i-1}
\end{align*}
The next thing to show is that
\[ \frac{p_i}{q_i} = \left [ n_0; n_1, n_2, \ldots , n_{2i-2} \right] \] and
\[ \frac{r_i}{s_i} = \left [ n_0; n_1, n_2, \ldots , n_{2i-1} \right]. \]
\par There are formulas in \cite{enumeration} where \textit{approximants} are defined as follows.
\begin{align*}
g_0&= n_0 & h_0 &=1 & g_1&=n_1n_0+1 & h_1&= n_1
\end{align*}
\begin{align*}
g_i &= n_ig_{i-1}+g_{i-2}\\
h_i &= n_ih_{i-1}+h_{i-2}
\end{align*}
It was claimed in \cite{enumeration} that $\displaystyle \frac{g_i}{h_i} = \left[ n_0 ; n_1, n_2, \ldots, n_i \right]$. There is a way to relate $\frac{p_i}{q_i}$ and $\frac{r_i}{s_i}$ to $\frac{g_i}{h_i}$. In particular,
\begin{align*}
\frac{p_i}{q_i} &= \frac{g_{2i-2}}{h_{2i-2}} & \frac{r_i}{s_i} &= \frac{g_{2i-1}}{h_{2i-1}}
\end{align*}
for all $i \geq 1$. We show it as follows.
\begin{align*}
p_1&= n_0r_0+p_0& r_1&= n_1p_1+r_0 \\
\nonumber &=n_0& \nonumber &= n_1n_0+1\\
\nonumber &=g_0 & \nonumber &= g_1 \\
q_1&= n_0s_0 +q_0 & s_1&= n_1q_1+s_0\\
\nonumber &=n_0 \cdot 0 +1& \nonumber &=n_1\cdot 1 + 0 \\
\nonumber &= 1 & \nonumber &=n_1\\
\nonumber &= h_0 & \nonumber &=h_1\\
\end{align*}
The assertions work for $i=1$. To show that the formulas work for all other $i$, we show that they work for $i+1$. That is,
\begin{align*}
p_{i+1}&= g_{2(i+1) -2}& q_{i+1}&= h_{2i}\\
\nonumber &= g_{2i+2-2}& r_{i+1}&= g_{2i+1}\\
\nonumber &= g_{2i}& s_{i+1}&= h_{2i+1}.
\end{align*}
The following are the computations.
\begin{align*}
p_{i+1}&= n_{2i}r_i +p_i & r_{i+1}&=n_{2i+1}p_{i+1}+r_i\\
\nonumber &=n_{2i}g_{2i-1}+g_{2i-2}& \nonumber &= n_{2i+1}g_{2i}+g_{2i-1}\\
\nonumber &=g_{2i}& \nonumber &=g_{2i+1}\\
q_{i+1}&= n_{2i}s_i +q_i& s_{i+1}&= n_{2i+1}q_{i+1}+s_i\\
\nonumber &= n_{2i}h_{2i-1}+h_{2i-2}& \nonumber &= n_{2i+1}h_{2i}+h_{2i-1}\\
\nonumber &= h_{2i}& \nonumber &= h_{2i+1}
\end{align*}
\par Now, except for $p_0$, $q_0$, $r_0$, and $s_0$, all $p_i$, $q_i$, $r_i$, and $s_i$ are consolidated into the formulas for $g_i$ and $h_i$ so that
\[ \frac{g_i}{h_i} = \left[ n_0 ; n_1, n_2, \ldots, n_i \right]. \]
\end{proof}

\subsection{Forms of E-words}
It is shown in section \ref{section:shortcut} what the form of E-words is if $k=0$ in the E-sequence or precisely $p/q= [n_0;] \in \mathbb{Z}$. In this section, we look at E-words up to $k=2$ and show what happens to the \textit{exponents} of $a$ and $b$ for any E-word. To alleviate complicated notations, we define $m_i =\lfloor \frac{n_i}{2} \rfloor$ and $M_i =\lceil\frac{n_i}{2} \rceil$. Then $m_i +M_i =n_i$ and $m_i=M_i$ if $n_i$ is even; $M_i=m_i+1$ if $n_i$ is odd. This notation is used as superscripts. For example, Theorem \ref{theorem:shortcut} shows $E_{n_0} = b^{M_i}ab^{m_i}$ and $E_{1/n_1} = a^{m_i}ba^{M_i}$.

\par The following theorem shows the length of a given E-word $E_{m/n}$.
\begin{theorem}
The length of $E_{m/n}$ in the generator set $\left\{a,b\right\}$ is $\displaystyle \lvert m \rvert + \lvert n \rvert$. Moreover $E_{m/n}$ has $\lvert m \rvert$ $b$-factors and $\lvert n \rvert$ $a$-factors.
\end{theorem}
\begin{proof}
Suppose $g, h \in F_2$; $g$ has $p$ $b$-factors and $q$ $a$-factors. Suppose $h$ has $r$ $b$-factors and $s$ $a$-factors. The modified algorithm applied to $(g,h)$ replaces one of the generators with either $gh$ or $hg$. Both $gh$ and $hg$ have $p+r$ $b$-factors and $q+s$ $a$-factors. The algorithm starts with $(a,b)$ and ends with $\left( E_{p/q}, E_{r/s}\right)$. The fraction $1/1$ correspond to $ba$ which is the very first new E-word of the algorithm. For $E_{1/1}=ba$, the assertion is true. Suppose this assertion is still true after the algorithm stops at the pair $\left( E_{p/q}, E_{r/s}\right)$. Then $E_{p/q}$ has $p$ $b$-factors and $q$ $a$-factors; $E_{r/s}$ has $r$ $b$-factors and $s$ $a$-factors. Continuing the algorithm just one step further yields a new E-word $E_{(p+r)/(q+s)}$. It is either $E_{p/q}E_{r/s}$ or $E_{r/s}E_{p/q}$. In any case, $E_{(p+r)/(q+s)}$ has $p+r$ $b$-factors and $q+s$ $a$-factors.
\end{proof}

\par The next theorem shows how many E-words there are of length $n$.

\begin{theorem} \label{theorem:length}
Let $n \in \mathbb{N}$. There exists a bijection between the set of nonzero integers relatively prime with $n$ in the interval $[-n,n] \subset \mathbb{R}$ and the set of E-words of length $n$.
\end{theorem}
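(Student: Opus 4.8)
The plan is to use the preceding length theorem to convert the statement into one about reduced fractions, and then to write down the bijection explicitly as ``send an E-word to the numerator of its index.'' Write $N$ for the integer $n$ appearing in the statement. Since the preceding theorem gives $E_{p/q}$ length $\lvert p\rvert+\lvert q\rvert$ for a reduced fraction $p/q$, and since the enumeration scheme $E$ is injective on $\mathbb{Q}\cup\{\infty\}$ (distinct indices give distinct words --- this is what it means to \emph{enumerate} the primitives), each E-word of length $N$ equals $E_{p/q}$ for a unique reduced $p/q$ with $\lvert p\rvert+\lvert q\rvert=N$. Hence the set of E-words of length $N$ is in canonical bijection with $S_N=\{p/q:\gcd(p,q)=1,\ \lvert p\rvert+\lvert q\rvert=N\}$, and it suffices to biject $S_N$ with $T_N=\{m\in\mathbb{Z}:m\neq 0,\ \gcd(m,N)=1,\ -N\le m\le N\}$.

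For $N\ge 2$ I would take $\Phi\colon p/q\mapsto p$. The one genuine computation is coprimality: from $N=\lvert p\rvert+\lvert q\rvert$ we get $\gcd(p,N)=\gcd(p,\lvert p\rvert+\lvert q\rvert)=\gcd(p,q)=1$, so every numerator is prime to $N$. For $N\ge 2$ the two orphan fractions $1/0=\infty$ and $0/1$ have length $1$ and so do not lie in $S_N$; consequently every $p/q\in S_N$ has $q\ge 1$ and $\lvert p\rvert\le N-1$ with $p\neq 0$, placing $\Phi(p/q)=p$ in $T_N$. The map is invertible because the numerator determines the fraction: given $m\in T_N$, the only possible denominator is $q=N-\lvert m\rvert$, which satisfies $q\ge 1$ and $\gcd(m,q)=\gcd(\lvert m\rvert,N)=1$, so $m/(N-\lvert m\rvert)$ is the unique element of $S_N$ mapping to $m$. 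This makes $\Phi$ a bijection; as a check, both sides have cardinality $2\phi(N)$.

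It remains to dispatch $N=1$ by hand, since there $\Phi$ breaks down: the orphan $0/1$ has numerator $0\notin T_1$. But here $S_1=\{1/0,\,0/1\}$, corresponding to the two length-one E-words $B$ and $A^{-1}$, while $T_1=\{-1,1\}$; both sets have two elements, so any matching (for definiteness $1/0\mapsto 1$, $0/1\mapsto -1$) is a bijection.

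I expect the content to be bookkeeping rather than difficulty: the substantive input is the elementary identity $\gcd(p,\lvert p\rvert+\lvert q\rvert)=\gcd(p,q)$, and the only real pitfalls are tracking the sign of $p$ so that $m$ and $-m$ each get a preimage, correctly excluding the two orphan fractions for $N\ge 2$, and noting that they force the separate treatment of $N=1$. A purely enumerative alternative --- showing $\lvert S_N\rvert=\lvert T_N\rvert$ directly via Euler's totient --- would also close the argument should an explicit map be undesirable.
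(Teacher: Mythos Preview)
Your argument is correct and matches the paper's approach: the paper too defines the bijection $x\mapsto (x,\,n-\lvert x\rvert)$ between the coprime integers and the index set $\{(p,q):\lvert p\rvert+q=n,\ p\neq 0,\ \gcd(p,q)=1\}$ of length-$n$ E-words, using the same $\gcd$ identity. Your treatment is in fact slightly more careful, since you explicitly invoke the injectivity of $E$ and handle the $N=1$ case separately (the paper's set $\Phi=\{p:0<\lvert p\rvert<n,\ \gcd(p,n)=1\}$ is empty there), but the substance is identical.
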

\begin{proof}
Let $\Phi$ and $\Psi$ be the sets defined as follows.
\begin{align*}
 \Phi &=\left\{ p \in \mathbb{Z} : 0 < \lvert p \rvert < n \text{ and } \mathrm{gcd}(p,n) =1 \right \}\\
 \Psi &= \left\{ (p,q) \in \mathbb{Z}\times\mathbb{N} : \lvert p \rvert + q = n \text{, } p \neq 0 \text{ and } \mathrm{gcd}(p,q) = 1 \right\}
\end{align*}
\par Define a function $f: \Phi \to \Psi$ given by $f(x) = \left( x, n- \lvert x \rvert \right)$. Let $x\in \Phi$. Then $\lvert x \rvert + \left( n- \lvert x \rvert \right)=n$, $\mathrm{gcd}(x,n)=1$ and $n-\lvert x\rvert > 0$. Also $\mathrm{gcd}\bigl(x, n - \lvert x \rvert \bigr)=1$ since a divisor of both $x$ and $n- \lvert x \rvert$ is also a divisor of $n$. Hence $f(x) \in \Psi$. Let $(p,q) \in \Psi$. Then $q=n-\lvert p \rvert$ and $\mathrm{gcd}(p,n)=1$ by a similar argument. Thus, $p\in \Phi$ and $f(p)= (p,q)$. If $x \neq y \in \Phi$, $f(x) = \left( x, n-\lvert x\rvert \right) \neq \left( y, n- \lvert y \rvert \right) = f(y)$. The set $\Psi$ is exactly the index set of all E-words of length $n$.
\end{proof}

\par\indent For $p/q > 1$ and $k \geq 2$, four possible cases of an E-sequence can be fed into the modified Gilman-Maskit algorithm. The E-word corresponding to $\left [ n_0; n_1, \ldots, n_{k} \right]$ is also a word in $E \left([ n_0; n_1, \ldots, n_{k-2}] \right )$ and $E \left([ n_0; n_1, \ldots, n_{k-1}] \right )$ if $k\geq 2$. By induction, $E \left([ n_0; n_1, \ldots, n_{k}] \right )$ is a word in $E \left([ n_0; n_1, n_{2}] \right )$ and $E \left([ n_0; n_1] \right )$.

\par \mbox{}

\par\noindent Summary of cases $k \leq 2$:
\begin{center}
\begin{tabular}{|l|c|}
\hline
E-sequence & stopping pair \\
\hline
$\displaystyle\left[ \textrm{odd}; n_1 \right]$ & $\displaystyle \left(b^{M_0}ab^{m_0}, b^{M_0}\left(ab^{n_0}\right)^{n_1-1}ab^{M_0} \right)$\\
$\displaystyle\left[ \textrm{even}; n_1 \right]$ & $\displaystyle \left(b^{m_0}ab^{m_0}, b^{m_0}\left(ab^{n_0} \right)^{m_1-1}ab^{n_0+1}\left(ab^{n_0}\right)^{M_1-1}ab^{m_0} \right)$\\
$\displaystyle\left[ \textrm{odd}; 1, n_2 \right]$ & $\displaystyle \left(b^{M_0}\left(ab^{n_0+1}\right)^{m_2}ab^{n_0}\left(ab^{n_0+1}\right)^{M_2-1}ab^{M_0},b^{M_0}ab^{M_0} \right)$\\
$\displaystyle\left[ \textrm{even}; 1, n_2 \right]$ & $\displaystyle \Bigl(b^{m_0}\left(ab^{n_0+1} \right)^{n_2}ab^{m_0}, b^{m_0+1}ab^{m_0} \Bigr)$\\
\hline
\end{tabular}
\end{center}
\par Note that the formulas above work even if $n_k=1$.
\begin{corollary}\label{kleq}
For $n_0>0$, $k\geq 3$ and $p/q=\left[ n_0; n_1,\ldots, n_k \right]$, $ E_{p/q}$ is a word in $a$ and $b$ of the form \[ b^{k_1}ab^{k_2}ab^{k_3}\cdots ab^{k_q}ab^{k_{q+1}}\] where $k_1, k_{q+1} \in \left\{m_0,M_0\right\}$ and $\left\{k_2,k_3,\ldots, k_q \right\} = \left\{n_0,n_0+1 \right\}$.
\end{corollary}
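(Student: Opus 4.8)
The plan is to argue by strong induction on $k$, leaning on the recursive description already established: by the remark preceding the corollary, $E\left([n_0;n_1,\ldots,n_k]\right)$ is a word in the two earlier E-words $E\left([n_0;\ldots,n_{k-2}]\right)$ and $E\left([n_0;\ldots,n_{k-1}]\right)$, and by the ``Consecutive Steps'' subsection this word is produced by applying $n_k$ modified-algorithm steps through exactly one of the six tabulated combination formulas. The base of the induction is supplied verbatim by the summary table for $k\le 2$: each of the forms listed for $[\textrm{odd};n_1]$, $[\textrm{even};n_1]$, $[\textrm{odd};1,n_2]$ and $[\textrm{even};1,n_2]$ already exhibits the asserted shape, namely a leading $b$-block with exponent in $\{m_0,M_0\}$, a run of interior $b$-blocks with exponents in $\{n_0,n_0+1\}$ each flanked by a single $a$, and a trailing $b$-block with exponent in $\{m_0,M_0\}$. (The innermost letters, and hence these exponent pools, originate from $E_{n_0}=b^{M_0}ab^{m_0}$ of Theorem~\ref{theorem:shortcut}.)

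For the inductive step I would first record the elementary identities that do all the arithmetic work: when $n_0$ is odd, $m_0+M_0=n_0$ and $M_0+M_0=n_0+1$ while $m_0+1=M_0$; and when $n_0$ is even, $m_0=M_0=n_0/2$, so every pairwise sum of boundary exponents is $n_0$. These are precisely the sums that arise when two E-words of the asserted shape are combined by a consecutive-steps formula. Since every such word both begins and ends with a $b$-block, no two $a$'s can ever become adjacent under concatenation; each internal junction simply merges two boundary exponents into a single interior block, so the alternating ``$b$-blocks separated by single $a$'s'' shape is automatically preserved, and the total number of $a$'s accumulates to the denominator $q$ by the same recursion that tracks the $g_i,h_i$ in Theorem~\ref{theorem:esequence}. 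An interior block born at a junction is therefore $n_0$ (an $m_0$-with-$M_0$ merge) or $n_0+1$ (an $M_0$-with-$M_0$ merge or the extra $b$ inserted by an $E_{1/n}$ factor), while the outer blocks of the new word are inherited from the outermost blocks of the two factors, possibly augmented by a single appended $b$ (sending $m_0$ to $M_0$ in the odd case).

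The delicate point, and the step I expect to be the main obstacle, is to guarantee that these merges and augmentations never escape the permitted pools: in the odd case the forbidden outcomes $m_0+m_0=n_0-1$ and $M_0+1=m_0+2$ must be shown never to occur, which is an orientation question about \emph{which} boundary of each factor is $m_0$ and which is $M_0$. To control it I would strengthen the induction hypothesis so that, for every truncation $[n_0;\ldots,n_j]$, it records not merely that the boundary exponents lie in $\{m_0,M_0\}$ but the exact value of each of the two boundary exponents together with whether the E-word is a palindrome; by the Gilman--Keen criterion (a word is a palindrome iff the associated product of numerator and denominator is even, as used in Theorem~\ref{theorem:withtable}) this palindrome tag is pinned down by parity and is exactly the information needed to orient each factor. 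With the strengthened invariant in hand, each of the six formulas reduces to a finite check: one reads off the outermost blocks of the two factors, confirms that the junction the formula creates pairs an $m_0$ with an $M_0$ or two $M_0$'s but never two $m_0$'s, and confirms that any trailing multiplication by $b$ falls on an $m_0$-boundary and so lands on $M_0$. Verifying that the refined data (boundary values and palindrome type) transforms consistently across all six cases and their palindrome sub-cases is where the genuine work lies; once that bookkeeping closes, the asserted form for $k\ge 3$ follows immediately.
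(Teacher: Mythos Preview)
Your plan for the containment $\{k_2,\ldots,k_q\}\subseteq\{n_0,n_0+1\}$ is workable but heavier than what the paper does. Rather than carry a strengthened invariant (exact boundary exponents plus palindrome tag) through all six consecutive-step formulas, the paper simply observes that $E_{p/q}$ is a \emph{positive} word in the two explicit generators displayed in the $k\le 2$ table (the stopping pair of $[n_0;n_1]$, or of $[n_0;1,n_2]$ when $n_1=1$). One then reads the boundary $b$-blocks directly off those four concrete words: in the odd-$n_0$ rows every left boundary is $M_0$, so an $m_0+m_0$ junction is impossible; in the even-$n_0$ rows $m_0=M_0$, so $m_0+m_0=n_0$ is harmless. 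Your orientation bookkeeping would reach the same conclusion, but it re-derives what the table already exhibits. (Incidentally, the modified steps never ``append a single $b$''; each step replaces a generator by a product, so the outer $b$-blocks are always inherited verbatim from a factor.)

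The real gap is that you treat only the containment and never the \emph{equality} $\{k_2,\ldots,k_q\}=\{n_0,n_0+1\}$. The corollary asserts that both exponents actually occur, and this is where the paper spends most of its argument. In the $[\text{even};n_1]$ row with $n_1=2$, and in the $[\text{even};1,n_2]$ row for every $n_2$, neither word of the stopping pair carries an interior $b^{n_0}$, and the first product the algorithm forms can still miss $n_0$; the paper pushes the E-sequence one or two steps further (computing $gh$, $hg$, $ghg$ explicitly) to exhibit an $n_0$-block once $k\ge 3$, invoking at the end the continued-fraction convention $n_k\ge 2$ to dispose of a residual exception. Your strengthened induction, as stated, only constrains which exponents \emph{can} appear; it gives no mechanism for showing that both \emph{do}. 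To close the proof you would need to add exactly this case analysis, which is the substance of the paper's argument.
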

\begin{proof}
By Theorem \ref{theorem:esequence}, $E_{p/q}$ is the last modified generator in running the E-sequence $\left[n_0;n_1,\ldots,n_k \right]$. The modified algorithm has to output the words $E\left(\left[n_0;n_1\right]\right)$ and $E\left(\left[n_0;n_1,n_2\right]\right)$ in the middle of the process. Hence $E_{p/q}$ is a word in $E\left(\left[n_0;n_1\right]\right)$ and $E\left(\left[n_0;n_1,n_2\right]\right)$. The possibilities of these words are listed in the preceding table. Any product or powers of them has $b^{m_0}b^{m_0}$, $b^{m_0}b^{m_0+1}$, $b^{M_0}b^{M_0}$, or $b^{m_0}b^{M_0}$ in its substring which simplifies to either $b^{n_0}$ or $b^{n_0+1}$. The table also shows that $E_{p/q}$ is of the claimed form for $i=2,\ldots,q$, and $k_1$ and $k_{q+1}$ are in $\left\{M_0, m_0\right\}$.

\par\indent In the second case in the table, $m_1$ and $M_1$ can possibly equal to $1$ so there might be no $b$-exponent equal to $n_0$. We show that $n_0$ still appears as an exponent in $E\left(\left[ n_0;n_1,\ldots,n_k\right]\right)$ if $k\geq 3 $.
\par\indent Let $n_1>1$. Suppose after running the E-sequence $[n_0;n_1]$ on $(a,b)$, the new pair is $(g,h)$. If $n_0$ is even, then $g$ is a palindrome. Continuing the E-sequence further to $[n_0;n_1,1]$ turns $g$ into either $gh$ or $hg$.
\begin{align*}
gh &= b^{m_0}ab^{n_0}\left(ab^{n_0}\right)^{m_1-1}ab^{n_0+1}\left(ab^{n_0}\right)^{M_1-1}ab^{m_0}\\
\nonumber &= b^{m_0}\left(ab^{n_0}\right)^{m_1}ab^{n_0+1}\left(ab^{n_0}\right)^{M_1-1}ab^{m_0}\\
hg &= b^{m_0}\left(ab^{n_0}\right)^{m_1-1}ab^{n_0+1}\left(ab^{n_0}\right)^{M_1-1}ab^{n_0}ab^{m_0}\\
\nonumber &= b^{m_0}\left(ab^{n_0}\right)^{m_1-1}ab^{n_0+1}\left(ab^{n_0}\right)^{M_1}ab^{m_0}
\end{align*}
Hence $E\left(\left[n_0;n_1,n_2\right]\right)$, for $n_2>1$, is a word in $E\left(\left[n_0;n_1\right]\right)$ and $E\left(\left[n_0;n_1,1\right]\right)$, but $E\left(\left[n_0;n_1,n_2,n_3\right]\right)$ is a word in $E\left(\left[n_0;n_1\right]\right)$ and $E\left(\left[n_0;n_1,n_2\right]\right)$. Thus, there is an $i$ with $2\leq i \leq q$ so that $k_i=n_0$.
\par The same phenomenon occurs in the last case where both primitive generators do not have $n_0$ as a $b$-exponent. We must also show that $n_0$ still appears as an exponent in $E\left(\left[n_0;n_1,\ldots,n_k\right]\right)$ for $k \geq 3$.
\par Let $n_2\geq 1$. Suppose after running the E-sequence $[n_0;1,n_2]$ on $(a,b)$, the new pair is $(g,h)$. If $n_0$ is even, $g$ is a palindrome and $h$ is not. Then the E-sequence $\left[n_0;1,n_2,1\right]$ turns $(g,h)$ into $(g,gh)$. Continuing further, $\left[n_0;1,n_2,2\right]$ stops with the pair $\left(g,ghg\right)$ while $\left[n_0;1,n_2,1,1\right]$ stops with the pair $\left( ghg,gh\right)$. Thus $E\left(\left[n_0;1,n_2,1,n_4\right]\right)$ is a word in $gh$ and $ghg$ for $n_4>1$, and $E\left(\left[n_0;1,n_2,n_3\right]\right)$ is a word in $g$ and $ghg$ for $n_3>2$.
\begin{align*}
ghg &= b^{m_0}\left(ab^{n_0+1}\right)^{n_2}ab^{m_0}b^{m_0+1}ab^{m_0}b^{m_0}\left(ab^{n_0+1}\right)^{n_2}ab^{m_0}\\
\nonumber &= b^{m_0}\left(ab^{n_0+1}\right)^{n_2}ab^{n_0+1}ab^{n_0}\left(ab^{n_0+1}\right)^{n_2}ab^{m_0}\\
gh &= b^{m_0}\left(ab^{n_0+1}\right)^{n_2}ab^{m_0}b^{m_0+1}ab^{m_0}\\
\nonumber &= b^{m_0}\left(ab^{n_0+1}\right)^{n_2}ab^{n_0+1}ab^{m_0}
\end{align*}
Hence both $E\left(\left[n_0;1,n_2,1,n_4\right]\right)$ and $E\left(\left[n_0;1,n_2,n_3\right]\right)$ have $b$-exponents equal to $n_0$.
\par \emph{Remark:} The E-word corresponding to $\left[n_0;1,n_2,1\right]$ still has no $b$-exponent equal to $n_0$. 
However, we make a convention, to avoid ambiguity in the continued fraction expansion $\left[n_0;n_1,\ldots,n_k\right]$. It is the convention that the last entry $n_k$ must be at least $2$.

\end{proof}

\par The case when $p/q<1$ is similar but $n_0=0$ and the index shifts by $1$. For example, the E-sequence $\left[ 0; n_1,n_2,n_3 \right]$ is similar to $\left[ n_1; n_2, n_3 \right]$ which is greater than $1$ as a rational number. 

\mbox{}

\par\noindent Summary of cases for $n_0=0$ and $k \leq 3$:
\begin{center}
\begin{tabular}{|l|c|}
\hline
E-sequence & stopping pair \\
\hline
$\displaystyle\left[0; \textrm{odd}; n_2 \right]$ & $\displaystyle \bigl( a^{M_1}\left(ba^{n_1}\right)^{n_2-1}ba^{M_1},a^{m_1}ba^{M_1} \bigr)$\\
$\displaystyle\left[0; \textrm{even}; n_2 \right]$ & $\displaystyle \left(a^{m_1}\left(ba^{n_1}\right)^{M_2-1}ba^{n_1+1}\left(ba^{n_1}\right)^{m_2-1}ba^{m_1}, a^{m_1}ba^{m_1} \right)$\\
$\displaystyle\left[0; \textrm{odd}; 1, n_3 \right]$ & $\displaystyle \left( a^{M_1}ba^{M_1}, a^{M_1}\left(ba^{n_1+1}\right)^{M_3-1}ba^{n_1}\left(ba^{n_1+1}\right)^{m_3}ba^{M_1} \right) $\\
$\displaystyle\left[0; \textrm{even}; 1, n_3 \right]$ & $\displaystyle \Bigl( a^{m_1}ba^{m_1+1}, a^{m_1}\left(ba^{n_1+1}\right)^{n_3}ba^{m_1} \Bigr)$\\
\hline
\end{tabular}
\end{center}
\par Likewise, the table works for $n_2$ or $n_3$ possibly equal to $1$.
\begin{corollary}
For $n_0=0$, $n_1 \geq1$, $k\geq 4$ and $p/q =\left[ n_0;n_1,\ldots,n_k\right]$, $E_{p/q}$ is a word in $a$ and $b$ of the form \[a^{k_1}ba^{k_2}ba^{k_3}\cdots ba^{k_p}ba^{k_{p+1}}\] where $k_1, k_{p+1} \in \left\{ m_1,M_1 \right\}$ and $\left\{k_2,k_3,\ldots,k_p\right\}=\left\{n_1,n_1+1\right\}$.
\end{corollary}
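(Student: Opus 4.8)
The plan is to run the argument for Corollary \ref{kleq} with the roles of $a$ and $b$ exchanged and every index raised by one, which is exactly the symmetry recorded in the remark that $[0;n_1,n_2,n_3]$ behaves like $[n_1;n_2,n_3]$. First I would apply Theorem \ref{theorem:esequence} to identify $E_{p/q}$ with the last changed generator of the modified algorithm run on $[0;n_1,\ldots,n_k]$, and then invoke the induction already established above: $E([0;n_1,\ldots,n_k])$ is a word in the two base generators $E([0;n_1])$ and $E([0;n_1,n_2])$. Because $n_0=0$ makes the algorithm preserve the left generator $a$ first, these base words are built around the letter $a$ rather than $b$. The first two rows of the preceding ``$n_0=0$, $k\leq 3$'' table exhibit the stopping pair of $[0;n_1,n_2]$ as $\bigl(E([0;n_1,n_2]),E([0;n_1])\bigr)$, with $E([0;n_1])=a^{m_1}ba^{M_1}$ and $E([0;n_1,n_2])$ a string whose interior consists of blocks $ba^{n_1}$ and $ba^{n_1+1}$.

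Next I would read off the outermost $a$-exponents of each base word: $E([0;n_1])$ opens with $a^{m_1}$ and closes with $a^{M_1}$, while $E([0;n_1,n_2])$ opens and closes with $a^{M_1}$ when $n_1$ is odd and with $a^{m_1}$ when $n_1$ is even. Forming any product or power of the two base words juxtaposes a closing exponent against an opening exponent, so the junction substrings that can arise are $a^{M_1}a^{m_1}$, $a^{M_1}a^{M_1}$, and, only when $n_1$ is even, $a^{m_1}a^{m_1}$. Since $m_1+M_1=n_1$, since $2M_1\in\{n_1,n_1+1\}$, and since $2m_1=n_1$ whenever $n_1$ is even, each of these collapses to $a^{n_1}$ or $a^{n_1+1}$; the one configuration that would fail, $a^{m_1}a^{m_1}=a^{n_1-1}$ for odd $n_1$, never occurs because for odd $n_1$ neither base word closes in $a^{m_1}$. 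Every exponent strictly inside $E_{p/q}$ therefore lies in $\{n_1,n_1+1\}$, while only the first and last exponents escape combination and so remain in $\{m_1,M_1\}$; this is precisely the claimed form $a^{k_1}ba^{k_2}b\cdots ba^{k_{p+1}}$.

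The main obstacle, just as in Corollary \ref{kleq}, is to upgrade the containment $\{k_2,\ldots,k_p\}\subseteq\{n_1,n_1+1\}$ to equality, that is, to show the value $n_1$ genuinely appears and not only $n_1+1$. In the even-$n_1$ rows the counts $m_2-1$ and $M_2-1$ can vanish, so $E([0;n_1,n_2])$ may itself display only $a^{n_1+1}$ blocks, and the same degeneracy recurs in the last two table rows where $n_2=1$. To resolve this I would repeat the fine bookkeeping from the proof of Corollary \ref{kleq} one index higher: track how the algorithm carries the intermediate pair $(g,h)$ at stage $[0;n_1,n_2]$ or $[0;n_1,1,n_3]$ to its successors $gh$, $hg$, or $ghg$, and verify that once $k\geq 4$ the concatenation forces an $a^{n_1}$ block into the interior. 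The threshold $k\geq 4$ here plays the part that $k\geq 3$ played before, and the hypothesis $n_k\geq 2$ on the final partial quotient is exactly what rules out the single exceptional word $[0;n_1,1,n_3,1]$, the lone configuration still lacking an $a^{n_1}$ block. Once its appearance is secured, the corollary follows.
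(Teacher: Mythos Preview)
Your proposal is correct and follows essentially the same approach as the paper's proof: reduce by induction to the stopping pair of $[0;n_1,n_2]$, check that every junction of end-exponent with start-exponent collapses to $a^{n_1}$ or $a^{n_1+1}$, and then defer the ``$n_1$ actually occurs'' part to the same case-by-case bookkeeping carried out in Corollary~\ref{kleq}, shifted up one index. Your junction analysis is in fact a bit sharper than the paper's, since you correctly note that for odd $n_1$ the combination $a^{m_1}a^{m_1}$ never arises from the $k=2$ stopping pair, whereas the paper simply lists all four exponent sums appearing across the full table.
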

\begin{proof}
The E-word corresponding to $\left[0;n_1,n_2,\ldots,n_k\right]$ is a word in the stopping pair of the E-sequence $\left[0;n_1,n_2,n_3\right]$. By induction, $E_{\left[0;n_1,\ldots,n_k\right]}$ is a word in the stopping pair of $\left[0;n_1,n_2\right]$. From the table above, any word in a given stopping pair has either $a^{M_1+M_1}$, $a^{m_1+m_1}$, $a^{m_1+1+m_1}$, or $a^{m_1+M_1}$ in its substring which is equal to either $a^{n_1}$ or $a^{n_1+1}$. Hence, $E_{\left[0;n_1,\cdots,n_k\right]}$ is of the form $a^{k_1}ba^{k_2}b\cdots a^{k_p}ba^{k_{p+1}}$ where $k_1, k_{p+1} \in \left\{m_1,M_1\right\}$ and $k_2,k_3,\ldots,k_p \in \left\{ n_1,n_1+1\right\}$. Using similar arguments in Corollary \ref{kleq}, there is an $i$ for which $k_i =n_1$ if $k \geq 4$.
\end{proof}



\section{Appendix} \label{section:examples}
The following shows the modified algorithm using the E-sequence $[5;4,3]$.
\begin{align*}
\left(a,b\right)& \to \left(ba,b\right) \to \left(bab,b\right) \to \left(b^2ab,b\right) \to \left(b^2ab^2,b\right) \\
\nonumber & \to \left(b^3ab^2,b\right) \to \left(b^3ab^2,b^3ab^3\right) \\
\nonumber & \to \left(b^3ab^2,b^3ab^5ab^3\right) \to \left(b^3ab^2,b^3ab^5ab^5ab^3\right) \\
\nonumber & \to \left(b^3ab^2,b^3ab^5ab^5ab^5ab^3\right) \to \left(b^3ab^5ab^5ab^5ab^3,b^3ab^5ab^5ab^5ab^3\right) \\
\nonumber & \to \left(b^3ab^5ab^5ab^5ab^6ab^5ab^5ab^5ab^5ab^3,b^3ab^5ab^5ab^5ab^3\right) \\
\nonumber & \to \left(b^3ab^5ab^5ab^5ab^6ab^5ab^5ab^5ab^5ab^6ab^5ab^5ab^5ab^3,b^3ab^5ab^5ab^5ab^3\right)
\end{align*}
One can observe that the sum of the exponents of $a$ is $13$, and that of $b$ is $68$. Moreover, the continued fraction expansion of $\frac{68}{13}$ is $[5;4,3]$.\\

The following shows the algorithm using the E-sequence $[4;3,2]$.
\begin{align*}
\left(a,b\right)& \to \left(ba,b\right) \to \cdots \to \left(b^2ab^2,b\right) \to \left(b^2ab^2,b^3ab^2\right) \\
\nonumber & \to \left(b^2ab^2,b^2ab^5ab^2\right) \to \left(b^2ab^2, b^2ab^5ab^4ab^2\right) \\
\nonumber & \to \left(b^2ab^4ab^5ab^4ab^2,b^2ab^5ab^4ab^2\right) \\
\nonumber & \to \left(b^2ab^4ab^5ab^4ab^4ab^5ab^4ab^2, b^2ab^5ab^4ab^2\right)
\end{align*}
In this example, the sum of the exponents of $a$ is $7$, and that of $b$ is $30$. Likewise, the continued fraction expansion of $\frac{30}{7}$ is $[4;3,2]$.

The following shows the algorithm using the E-sequence $[0;3,4]$.
\begin{align*}
\left(a,b\right)& \to \left(a,ba\right) \to \left(a,aba\right) \to \left(a,aba^2\right) \to \left(a^2ba^2,aba^2\right) \\
\nonumber & \to \left(a^2ba^3ba^2,aba^2\right) \to \left(a^2ba^3ba^3ba^2,aba^2\right) \\
\nonumber & \to \left(a^2ba^3ba^3ba^3ba^2,aba^2\right)
\end{align*}
The fraction for above is $\frac{4}{13}$ which has a continued fraction expansion of $[0;3,4]$.

Lastly, the example found in \cite{enumeration} is copied to the E-sequence $[3;2,4]$.
\begin{align*}
\left(a,b\right)& \to \left(ba,b\right) \to \left(bab,b\right) \to \left(b^2ab,b\right) \to \left(b^2ab,b^2ab^2\right)\\
\nonumber & \to \left(b^2ab,b^2ab^3ab^2\right) \to \left(b^2ab^3ab^3ab^2,b^2ab^3ab^2\right) \\
\nonumber & \to \left(b^2ab^3ab^4ab^3ab^3ab^2,b^2ab^3ab^2\right) \\
\nonumber & \to \left(b^2ab^3ab^4ab^3ab^3ab^4ab^3ab^2,b^2ab^3ab^2\right) \\
\nonumber & \to \left(b^2ab^3ab^4ab^3ab^4ab^3ab^3ab^4ab^3ab^2,b^2ab^3ab^2\right)
\end{align*}
The last modified generator looks like the E-word corresponding to $\frac{31}{9}$.

\section{Acknowledgement} The author thanks his thesis adviser, Professor Jane Gilman, for her patience and guidance, Professor Linda Keen for some useful conversations, and the referee for innumerable valuable suggestions and corrections.
\begin{bibdiv}
\begin{biblist}

\bib{MR1393195}{book}{
   author={Beardon, Alan F.},
   title={The geometry of discrete groups},
   series={Graduate Texts in Mathematics},
   volume={91},
   note={Corrected reprint of the 1983 original},
   publisher={Springer-Verlag, New York},
   date={1995},
   pages={xii+337},
   isbn={0-387-90788-2},
   review={\MR{1393195 (97d:22011)}},
}

\bib{algorithm}{article}{
  author={Gilman, J.},
  author={Maskit, B.},
  title={An algorithm for $2$-generator Fuchsian groups},
  journal={Michigan Math. J.},
  volume={38},
  date={1991},
  number={1},
  pages={13--32},
  issn={0026-2285},
  review={\MR{1091506 (92f:30062)}},
  doi={10.1307/mmj/1029004258},
}

\bib{MR1290281}{article}{
  author={Gilman, Jane},
  title={Two-generator discrete subgroups of ${\rm PSL}(2,{\bf R})$},
  journal={Mem. Amer. Math. Soc.},
  volume={117},
  date={1995},
  number={561},
  pages={x+204},
  issn={0065-9266},
  review={\MR{1290281 (97a:20082)}},
}

\bib{discon}{article}{
  author={Gilman, Jane},
  title={A discreteness condition for subgroups of ${\rm PSL}(2,{\bf C})$},
  conference={
   title={Lipa's legacy},
   address={New York},
   date={1995},
  },
  book={
   series={Contemp. Math.},
   volume={211},
   publisher={Amer. Math. Soc.},
   place={Providence, RI},
  },
  date={1997},
  pages={261--267},
  review={\MR{1476991 (98k:30060)}},
  doi={10.1090/conm/211/02824},
}

\bib{GKsemi}{article}{
	author={Gilman, Jane},
	author={Keen, Linda},
	title={Canonical hexagons and the $\mathrm{PSL}(2, \mathbb C)$ discreteness problem},
	note={preprint},
}
\bib{wordsequence}{article}{
  author={Gilman, Jane},
  author={Keen, Linda},
  title={Word sequences and intersection numbers},
  conference={
   title={Complex manifolds and hyperbolic geometry},
   address={Guanajuato},
   date={2001},
  },
  book={
   series={Contemp. Math.},
   volume={311},
   publisher={Amer. Math. Soc.},
   place={Providence, RI},
  },
  date={2002},
  pages={231--249},
  review={\MR{1940172 (2004c:30071)}},
  doi={10.1090/conm/311/05455},
}

\bib{criteria}{article}{
  author={Gilman, Jane},
  author={Keen, Linda},
  title={Discreteness criteria and the hyperbolic geometry of palindromes},
  journal={Conform. Geom. Dyn.},
  volume={13},
  date={2009},
  pages={76--90},
  issn={1088-4173},
  review={\MR{2476657 (2010c:30060)}},
  doi={10.1090/S1088-4173-09-00191-X},
}

\bib{enumeration}{article}{
  author={Gilman, Jane},
  author={Keen, Linda},
  title={Enumerating palindromes and primitives in rank two free groups},
  journal={J. Algebra},
  volume={332},
  date={2011},
  pages={1--13},
  issn={0021-8693},
  review={\MR{2774675 (2012f:20078)}},
  doi={10.1016/j.jalgebra.2011.02.010},
}

\bib{MR2699935}{book}{
   author={Jiang, Yicheng},
   title={Complexity of the Fuchsian group discreteness algorithm},
   note={Thesis (Ph.D.)--Rutgers The State University of New Jersey -
   Newark},
   publisher={ProQuest LLC, Ann Arbor, MI},
   date={2000},
   pages={50},
   isbn={978-0599-51414-0},
   review={\MR{2699935}},
}

\bib{Jiang}{article}{
   author={Jiang, Yicheng},
   title={Polynomial complexity of the Gilman-Maskit discreteness algorithm},
   journal={Ann. Acad. Sci. Fenn. Math.},
   volume={26},
   date={2001},
   number={2},
   pages={375--390},
   issn={1239-629X},
   review={\MR{1833246 (2002d:30053)}},
}

\bib{MR0427627}{article}{
   author={J{\o}rgensen, Troels},
   title={On discrete groups of M\"obius transformations},
   journal={Amer. J. Math.},
   volume={98},
   date={1976},
   number={3},
   pages={739--749},
   issn={0002-9327},
   review={\MR{0427627 (55 \#658)}},
}

\bib{pleating}{article}{
  author={Keen, Linda},
  author={Series, Caroline},
  title={Pleating coordinates for the Maskit embedding of the Teichm\"uller
  space of punctured tori},
  journal={Topology},
  volume={32},
  date={1993},
  number={4},
  pages={719--749},
  issn={0040-9383},
  review={\MR{1241870 (95g:32030)}},
  doi={10.1016/0040-9383(93)90048-Z},
}

\bib{rileyslice}{article}{
  author={Keen, Linda},
  author={Series, Caroline},
  title={The Riley slice of Schottky space},
  journal={Proc. London Math. Soc. (3)},
  volume={69},
  date={1994},
  number={1},
  pages={72--90},
  issn={0024-6115},
  review={\MR{1272421 (95j:32033)}},
  doi={10.1112/plms/s3-69.1.72},
}

\bib{vidur}{book}{
  author={Malik, Vidur},
  title={Curves generated on surfaces by the Gilman-Maskit algorithm},
  note={Thesis (Ph.D.)--Rutgers The State University of New Jersey -
  Newark},
  publisher={ProQuest LLC, Ann Arbor, MI},
  date={2007},
  pages={67},
  isbn={978-0549-21571-4},
  review={\MR{2710734}},
}

\bib{MR2581839}{article}{
  author={Malik, Vidur},
  title={Primitive words and self-intersections of curves on surfaces
  generated by the Gilman-Maskit discreteness algorithm},
  conference={
   title={In the tradition of Ahlfors-Bers. V},
  },
  book={
   series={Contemp. Math.},
   volume={510},
   publisher={Amer. Math. Soc.},
   place={Providence, RI},
  },
  date={2010},
  pages={209--237},
  review={\MR{2581839 (2011e:20072)}},
  doi={10.1090/conm/510/10027},
}

\bib{MR689477}{article}{
   author={Riley, Robert},
   title={Applications of a computer implementation of Poincar\'e's theorem on fundamental polyhedra},
   journal={Math. Comp.},
   volume={40},
   date={1983},
   number={162},
   pages={607--632},
   issn={0025-5718},
   review={\MR{689477 (85b:20064)}},
   doi={10.2307/2007537},
}

\end{biblist}
\end{bibdiv}

\end{document}